\renewcommand\@seccntformat[1]{\csname the#1\endcsname.\enspace}\makeatother
\renewenvironment{abstract}{\begin{quote}\hrulefill\par\footnotesize\textbf{\abstractname.}}{\par\vskip-0.5\baselineskip\hrulefill\end{quote}}
\newtheorem{introtheorem}{Theorem}  % separat numeriert, für die Einleitung
\newtheorem{thm}{Theorem}[section]
\newtheorem{theorem}[thm]{Theorem}
\newtheorem{lemma}[thm]{Lemma}
\newtheorem{proposition}[thm]{Proposition}
\newtheorem{corollary}[thm]{Corollary}
\newcommand\mkthm[2]{\newenvironment{#1}{\begin{#2}\rm}{\end{#2}}}
\newtheorem{thevarthm}[thm]{\varthmname}
\newenvironment{varthm*}[1]{\trivlist\item[]{\bf #1.}\it}{\endtrivlist}
\newenvironment{proof}[1][Proof]{\trivlist\item[\hskip\labelsep{\textit{#1.}}]}{\hspace*{\fill}$\Box$\endtrivlist}
\let\hat=\widehat
\renewcommand\bar{\overline}
\renewcommand\ge{\geqslant}  % schräge Variante
\renewcommand\le{\leqslant}  % schräge Variante
\newcommand\keywords[1]{{\renewcommand\thefootnote{}\footnotetext{\textit{Keywords:} #1.}}}
\newcommand\subclass[1]{{\renewcommand\thefootnote{}\footnotetext{\textit{Mathematics Subject Classification (2010):} #1.}}}
\newcommand\R{\mathbb R}
\newcommand\Z{\mathbb Z}
\newcommand\N{\mathbb N}
\newcommand\be[1][@{\;}r@{\;}c@{\;}l@{\;}l@{\;}]{$$\everymath{\displaystyle}\renewcommand\arraystretch{1.2}\begin{array}{#1}}
\newcommand\ee{\end{array}$$}
\newcommand\compact{\itemsep=0cm \parskip=0cm}
\newcommand\set[1]{\left\{#1\right\}}
\newcommand\with{\,\,\vrule\,\,}
\newcommand\newop[2]{\newcommand#1{\mathop{\rm #2}\nolimits}}
\newcommand\renewop[2]{\renewcommand#1{\mathop{\rm #2}\nolimits}}
\newop\Diag{Diag}
\newop\Hom{Hom}
\newop\GL{GL}
\newop\LCM{LCM}
\newop\Vol{Vol}
\newop\Pos{Pos}
\newop\Amp{Amp}
\newop\Bigcone{Big}
\newop\Nef{Nef}
\newop\NS{NS}
\newop\Fix{Fix}
\renewop\Re{Re}
\newop\mult{mult}
\newop\SL{SL}
\newop\M{M}
\newop\Div{Div}
\newop\End{End}
\newcommand\Endsym{\End^{\rm sym}}
\newcommand\eps{\varepsilon}
\begin{document}

   \title{Integrality of Seshadri constants and irreducibility of
   principal polarizations on products of two isogenous elliptic curves}
   \author{\normalsize Maximilian Schmidt}
   \date{\normalsize \today}
   \maketitle
   \thispagestyle{empty}
   \keywords{abelian surface, elliptic curve, Seshadri constant, isogeny, principal polarization}
   \subclass{%  
      14C20, % Divisors, linear systems, invertible sheaves
      14H40, % Jacobians
      14H52, % Elliptic curves
      14J99, % Surfaces and higher-dimensional varieties
      14K02, % Abelian varieties and schemes (Isogeny)
      14K99, % Abelian varieties and schemes
      11H55. % Quadratic forms (reduction theory)
   }

\begin{abstract}
In this paper we consider the question of when all Seshadri constants on a product of two isogenous elliptic curves $E_1\times E_2$ without complex multiplication are integers.
By studying elliptic curves on $E_1\times E_2$ we translate this question into a purely numerical problem expressed by quadratic forms.
By solving that problem, we show that
all Seshadri constants on $E_1\times E_2$ are integers if and only if the minimal degree of an isogeny $E_1\to E_2$ equals 1 or 2.
Furthermore, this method enables a characterization of irreducible principal polarizations on $E_1\times E_2$.
\end{abstract}

%*****************************************************************************

\section*{Introduction}
For an ample line bundle $L$ on a smooth projective variety $X$, the \emph{Seshadri constant} of $L$ at a point $x\in X$ is by definition the real number
   \be
      \eps(L,x)=\inf\set{\frac{L\cdot C}{\mult_x(C)}\with C\mbox{ irreducible curve through } x}
      \,.
   \ee
On abelian varieties, $\eps(L,x)$ is independent of the chosen point $x$.
Moreover, one knows by \cite{Bauer-Szemberg:sesh-abelian-surface} that
on abelian \emph{surfaces}
Seshadri constants are always rational numbers.
In the present paper we focus on the
question of when all Seshadri constants $\eps(L)$
on a given abelian surface are \emph{integers}.
This question was first approached in \cite{BGS:integer}, where it was shown
that integrality of Seshadri constants on an abelian surface $X$ is equivalent to requiring that
for every ample line bundle $L$ on $X$, either $\sqrt{L^2}$ is an integer
or $\eps(L)$ is computed by an elliptic curve.
From work of Bauer and Schulz \cite{Bauer-Schulz:sesh} one knows that
the latter condition is always satisfied when
$X$ is a self-product $E\times E$ of an elliptic curve without complex
multiplication.
In the present paper we study the more general situation that
$X$ is a product $E_1\times E_2$ of two isogenous elliptic curves
without complex multiplication.
One might hope that the Seshadri constants on these surfaces behave in a similar way as those
on a self-product, since the surfaces $E_1\times E_1$ and $E_1\times E_2$ are isogenous.
As our result will show, however, Seshadri constants exhibit quite unexpected behaviour under isogenies.

A first indication of this phenomenon follows from work of Kani \cite{Kani:elliptic}, who studied
the question of when $E_1\times E_2$ is the Jacobian of a smooth genus $2$ curve, i.e.,
when $E_1\times E_2$ carries an irreducible principal polarization.
His result states that
$E_1\times E_2$ is \textit{not} a Jacobian if and only if
the minimal degree $d$ of an isogeny $E_1\to E_2$ satisfies
$$d\in \{1, 2, 4, 6, 10, 12, 18, 22, 28, 30, 42, 58, 60, 70, 78, 102, 130, 190, 210, 330, 462\}\,$$
and for at most one more unknown value $d>462$.
Since the Seshadri constant of an irreducible principal polarization is $\frac43$ by \cite{Steffens:remarks},
it follows that on $E_1\times E_2$ non-integer Seshadri constants occur in any event when
$d$ is not contained in the list above.
Our result shows that non-integer Seshadri constants are even more frequent:

\begin{introtheorem}\label{introthm-1}
All Seshadri constants on $E_1\times E_2$ are integers if and only if
the minimal degree $d$ of an isogeny $E_1\to E_2$
equals $1$ or $2$.
\end{introtheorem}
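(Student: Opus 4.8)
The plan is to translate the geometric statement into a numerical problem about the intersection lattice $\NS(E_1\times E_2)$ and then to solve that problem. Writing $F_1,F_2$ for the classes of the two fibres and $\Gamma$ for the graph of a minimal isogeny $\phi\colon E_1\to E_2$ of degree $d$, one has $F_1^2=F_2^2=\Gamma^2=0$, $F_1\cdot F_2=F_1\cdot\Gamma=1$ and $F_2\cdot\Gamma=d$, so that $\NS(E_1\times E_2)\cong\Z^3$ carries the ternary form $L^2=2(ab+ac+bcd)$ for $L=aF_1+bF_2+c\Gamma$. Since $E_1,E_2$ have no complex multiplication, $\Hom(E_1,E_2)$ and $\Hom(E_2,E_1)$ are infinite cyclic, and I would first show that the elliptic curves on the surface fall into two explicit families, the graphs $\Gamma_n$ of $n\phi$ and the graphs $\Delta_m$ of $m\hat\phi$ (with $F_2=\Gamma_0$ and $F_1=\Delta_0$), compute their classes, and record the intersection numbers $L\cdot\Gamma_n$ and $L\cdot\Delta_m$ as two quadratic (parabolic) families in $n$ and $m$. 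This is the step that feeds all of the subsequent combinatorics, and it uses that an irreducible curve of self-intersection $0$ on an abelian surface is automatically a smooth elliptic curve.

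Next I would invoke the criterion of \cite{BGS:integer}: all Seshadri constants are integers if and only if for every ample $L$ with $L^2$ not a perfect square the constant $\eps(L)$ is computed by an elliptic curve. Because an elliptic curve is smooth, being computed by one forces $\eps(L)=\mu(L)\le\sqrt{L^2}$, where $\mu(L)$ denotes the minimum of $L\cdot C$ over the elliptic classes above. This reduces the whole theorem to the following purely numerical dichotomy, which I will call the gap problem: for which $d$ does there exist an ample integral class $L$ with $L^2$ not a perfect square and $L\cdot C\ge\sqrt{L^2}$ for every elliptic curve $C$? If such a class exists then $\eps(L)<\sqrt{L^2}\le L\cdot C$ for all elliptic $C$, so $\eps(L)$ cannot be computed by an elliptic curve and is therefore a non-integer by the criterion; conversely, the absence of such a class is what I must combine with the elliptic computation in order to deduce integrality.

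For the non-integrality direction I would solve the gap problem affirmatively for every $d\ge3$ by an explicit construction; for instance $L=-3F_1+2F_2+3\Gamma$ already works when $d=3$, since $L^2=6$ is a non-square while the two parabolic families give $L\cdot\Gamma_n=5-18n+18n^2$-type minima and one checks every elliptic curve meets $L$ in at least $3>\sqrt6$. I would then produce an analogous family of witnesses for general $d\ge3$, the conceptual point being that although the \emph{real} minima of the two parabolas always drop below $\sqrt{L^2}$, their \emph{integer} minima can be kept above it exactly when $d\ge3$; the threshold is a genuinely Diophantine, not a continuous, phenomenon. For the integrality direction I would show that no such class exists when $d\le2$: the case $d=1$ is immediate, since then $E_1\times E_2\cong E\times E$ and \cite{Bauer-Schulz:sesh} applies, while for $d=2$ I would prove, uniformly over the two-parameter ample cone, that whenever $L^2$ is a non-square some $\Gamma_n$ or $\Delta_m$ is submaximal and actually computes $\eps(L)$; here I use that for $d\le2$ the surface is not a Jacobian by \cite{Kani:elliptic}, which rules out the genus-$2$ curves that could otherwise contribute the smaller non-integer value $\eps=\tfrac43$ of \cite{Steffens:remarks}.

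The hard part will be the two halves of the gap problem, which are difficult for opposite reasons. Proving impossibility for $d=2$ is a statement quantified over all ample classes, so it cannot be settled by a finite check; I expect to handle it by reduction theory for the ternary form, bounding the admissible $(a,b,c)$ and showing that the integer minima of $L\cdot\Gamma_n$ and $L\cdot\Delta_m$ are forced below $\sqrt{L^2}$, the delicate point being that the sign of $a$ controls whether $\Delta_1$ or $\Delta_0$ is the smaller dual intersection (for $d=2$ one has $L\cdot\Delta_1-L\cdot\Delta_0=2a$, so a negative $a$ revives a submaximal dual curve and destroys any attempted gap). Producing a robust witness family for all $d\ge3$ is the other obstacle, since one must keep both infinite families of intersection numbers above $\sqrt{L^2}$ simultaneously while still arranging $L^2$ to be a non-square; controlling all $\Gamma_n$ and $\Delta_m$ at once, rather than merely the few smallest, is where the estimates will be most delicate.
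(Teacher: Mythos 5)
Your overall strategy --- reduce via the criterion of \cite{BGS:integer} to a numerical problem about intersections with elliptic curves --- is indeed the paper's strategy, but your argument breaks at exactly the step you identify as feeding ``all of the subsequent combinatorics'': the classification of elliptic curves on $E_1\times E_2$ is wrong. The elliptic curves are \emph{not} exhausted by the two one-parameter families of graphs of $n\varphi$ and of $m\hat\varphi$. By the Hayashida--Nishi argument (Lemma~1 of the paper, Lemma~\ref{Haya-Nishi}), every elliptic curve is, up to translation, an image $N_{a,b}$ of a map $x\mapsto(ax,b\varphi(x))$ with $(a,b)$ coprime; this is a single family indexed by $\mathbb P^1(\Q)$, and all curves with $|a|,|b|\ge 2$ lie in neither of your families (already for $d=1$ the image of $x\mapsto(2x,3x)$ in $E\times E$ meets both fibres with intersection number $>1$, so it is not a graph in either direction). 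Moreover, the intersection numbers carry an arithmetic twist absent from your ``two parabolic families'': $L\cdot N_{a,b}$ is a binary quadratic form in $(a,b)$ \emph{divided by} $\deg(\sigma_{a,b})=\gcd(a,d)$ (Lemma~\ref{numclass-elli}, Prop.~\ref{intersection-elli-lb}), and it is precisely this divisor that governs which curves become submaximal.

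The gap is fatal, and your own witness for $d=3$ exhibits it. Take $L=-3F_1+2F_2+3\Delta$; in the orthogonalized basis $\nabla=\Delta-3F_1-F_2$ this is $L\equiv 6F_1+5F_2+3\nabla$, which is ample with $L^2=2(6\cdot5-3\cdot 3^2)=6$. The curve $N_{3,2}$, the image of $x\mapsto(3x,2\varphi(x))$, has $\deg(\sigma_{3,2})=\gcd(3,3)=3$ and numerical class $\tfrac13\bigl(2^2\cdot 3\,F_1+3^2F_2+3\cdot 2\,\nabla\bigr)=4F_1+3F_2+2\nabla$, hence
$$
L\cdot N_{3,2}\;=\;6\cdot 3+5\cdot 4+3\cdot 2\cdot(-6)\;=\;2\;<\;\sqrt 6\,,
$$
so $L$ \emph{does} have a weakly submaximal elliptic curve; your two families, whose integer minima are $3$ and $5$, simply miss it. The same defect undermines the integrality direction: in the paper's proof for $d=2$ the submaximal curve is located, after an $\SL_2(\Z)$ base change coming from reduction of binary quadratic forms, among \emph{arbitrary} coprime pairs $(a,b)$, not among graphs, so restricting to your families leaves you with too few curves to conclude. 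Controlling all pairs simultaneously, with the $\gcd(a,d)$ factor, is exactly what Props.~\ref{Sesh-Num} and \ref{Sesh-Num-refined} and the reduction theory of reduced forms accomplish, and what makes the paper's $d\ge3$ witness $Q=(2,1,d)$ together with the shear $S$ work where a naive choice fails. Finally, the appeal to Kani for $d=2$ is a red herring: the absence of irreducible principal polarizations does not by itself exclude non-integer Seshadri constants (that is the very content of the theorem for the idoneal values $d\ge 3$), and the criterion of \cite{BGS:integer} already reduces everything to elliptic curves versus squareness of $L^2$, with no reference to genus-$2$ curves needed.
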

In particular, the non-integer Seshadri constants predicted by the theorem
whenever $d\ge 3$
must come from polarizations other than principal polarizations whenever $d$ is
contained in the list above.

Our proof is based on the
idea to
study the intersection numbers of line bundles with elliptic curves on $E_1\times E_2$, which
allows us to
rephrase the problem into a purely numerical question in terms of quadratic forms.
We will solve that numerical problem by using reduction theory of binary quadratic forms.
This approach also enables us to characterize irreducible principal polarizations on $E_1\times E_2$
in terms of quadratic forms.
Recall that two polarizations $L_1$ and $L_2$ are equivalent if there exists an automorphism $\psi$ of $X$ such that $\psi^*L_1$
is algebraically equivalent to $L_2$,
and denote
the set of isomorphism classes of principal polarizations on $X=E_1\times E_2$ by $P(X)$.
We will see in Sect.~\ref{class-PP} that there is a bijection between $P(X)$ and a set of certain quadratic forms:
\be
	P(X)\cong \left\{
	M=\left(
         \begin{array}{@{}cc@{}}
            A & B   \\
            B & C
         \end{array}
		\right)\in M_2(\Z)
	\,\vrule\,
\begin{array}{l}
   0\le 2B\le A\le C, \\ \gcd(A,B,C)=1,\ \det(M)=d
\end{array}
	\right\}\,.
\ee
We can characterize irreducible principal polarizations in these terms:

\begin{introtheorem}\label{introthm-2}
A matrix $M$ as above corresponds to a class of reducible principal polarizations if and only if $B=0$.
\end{introtheorem}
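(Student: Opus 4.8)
The plan is to reduce the statement to the geometric classification of product decompositions of $X$ and then match these with the factorizations of $d$ that are encoded by the diagonal entries of $M$. First I would record the reducibility criterion. A principal polarization $L$ on an abelian surface is reducible precisely when $(X,L)$ splits as a product of principally polarized elliptic curves; equivalently, $L$ is algebraically equivalent to a sum $C_1+C_2$ of two elliptic curves with $C_1\cdot C_2=1$ (so that $C_i^2=0$ and $L\cdot C_i=1$), equivalently there is a single elliptic curve $C\subset X$ with $L\cdot C=1$, its complement then being forced. This is the standard dichotomy: by Riemann--Roch the theta divisor of a principal polarization on a surface is either irreducible or a sum of two elliptic curves meeting transversally at one point. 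Thus reducible principal polarizations correspond bijectively to internal product decompositions $X=C_1\oplus C_2$ into complementary elliptic subgroups, with $L$ the associated product polarization.

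Next I would translate such a decomposition into the data of $M$. Under the bijection of Section~\ref{class-PP} I expect a product decomposition $X=C_1\oplus C_2$ to induce an orthogonal splitting of the rank-two quadratic lattice attached to $L$: the cross term between the two summands vanishes, so the corresponding Gram matrix is diagonal, and its two entries record the factorization of $d$ cut out by the decomposition. Concretely, a complementary pair of elliptic subgroups corresponds to a decomposition of the multiplicity space on which $\End_\Q(X)\cong M_2(\Q)$ acts, and the requirement that the sum map $C_1\times C_2\to X$ be an isomorphism, not merely an isogeny, forces a coprime factorization $d=AC$; this is where the order $\left(\begin{array}{cc}\Z & d\Z\\ \Z & \Z\end{array}\right)\cong\End(X)$ and its integral idempotents enter. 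Conversely, from a coprime factorization $d=AC$ one builds the two complementary subgroups, the coprimality guaranteeing $C_1\cap C_2=\{0\}$ and hence an honest product. In form language this is exactly a diagonal matrix $\Diag(A,C)$, i.e. $B=0$; since $\gcd(A,C)=1$ and one may order $A\le C$, such a matrix already satisfies $0\le 2B\le A\le C$ and is therefore its own reduced representative. As the assignment of Section~\ref{class-PP} is a bijection, the class of $L$ is represented by precisely this matrix, yielding reducible $\Rightarrow B=0$, while the construction just described gives the reverse implication $B=0\Rightarrow$ reducible.

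The main obstacle is this geometry--arithmetic dictionary in both directions: producing genuine complementary elliptic subgroups from a coprime factorization $d=AC$, and conversely reading off the coprime factorization from a given splitting, while tracking that the Gram matrix obtained is the one assigned in Section~\ref{class-PP}. Here reduction theory of binary quadratic forms is essential. It guarantees that the reduced representative is unique, so that ``$B=0$'' is a well-defined condition on the class rather than on a chosen matrix; and it lets one enumerate, for a form with $B\ne 0$, the possible intersection numbers $L\cdot C$ of $L$ with elliptic curves and verify that none equals $1$, i.e. that no splitting curve exists and the theta divisor is irreducible. Establishing this last exclusion---that a reduced form with $B\ne 0$ admits no elliptic curve meeting $L$ once---is the delicate point, and I would treat it by the same quadratic-form manipulations used to set up the bijection, showing that $L\cdot C=1$ for an elliptic curve $C$ already forces the orthogonal splitting and hence the diagonal shape $B=0$.
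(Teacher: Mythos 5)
Your skeleton agrees with the paper's proof: reducibility is equivalent to the existence of an elliptic curve $N$ with $L\cdot N=1$, and the theorem reduces to showing this happens exactly when the reduced form is diagonal, the diagonal entries giving a coprime factorization $d=AC$. But the two steps you yourself flag as ``the main obstacle'' are where all the content lies, and the mechanisms you point to do not close them. For the direction \emph{reducible} $\Rightarrow B=0$: writing $L\equiv N_{\alpha,\beta}+N_{\gamma,\delta}$, the paper diagonalizes $M_L$ by the integer matrix $S=\left(\begin{array}{cc}\alpha&\gamma\\ \beta&\delta\end{array}\right)$, and the entire difficulty is to prove that $S$ is \emph{unimodular} --- only then is the $\GL_2(\Z)$-class, hence the unique reduced representative, diagonal; over $\Q$ every symmetric matrix can be diagonalized by congruence, so any argument producing merely a rational or finite-index orthogonal splitting proves nothing about $B$. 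The paper establishes unimodularity in two steps absent from your plan: first $\gcd(\alpha,\gamma)=1$, by a prime-by-prime analysis of the equation $N_{\alpha,\beta}\cdot N_{\gamma,\delta}=1$; then the key observation that $\sigma_{\alpha,\beta}\times\sigma_{\gamma,\delta}:E_1\times E_1\to X$ is an isogeny of degree $\gcd(\alpha,d)\gcd(\gamma,d)=\gcd(\alpha\gamma,d)\le d$, while \emph{every} isogeny $E_1\times E_1\to E_1\times E_2$ has degree at least $d$; this forces $\gcd(\alpha\gamma,d)=d$, and then the intersection formula yields $(\alpha\delta-\beta\gamma)^2=1$. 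Your appeal to ``integral idempotents of the order $\left(\begin{array}{cc}\Z&d\Z\\ \Z&\Z\end{array}\right)$'' is a pointer, not an argument; some substitute for this minimal-degree step is unavoidable, and it is the heart of the proof.

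The converse direction $B=0\Rightarrow$ \emph{reducible} also needs an actual construction, and ``coprimality guaranteeing $C_1\cap C_2=\{0\}$'' is not one: the naive choice fails, e.g.\ $N_{A,1}\cdot N_{C,1}=(A-C)^2$, not $1$. What works is B\'ezout: since $\gcd(A,C)=1$, choose $\beta,\delta$ with $A\delta-C\beta=\pm1$; then Lemma~\ref{numclass-elli} gives $N_{A,\beta}\cdot N_{C,\delta}=d(A\delta-C\beta)^2/\bigl(\gcd(A,d)\gcd(C,d)\bigr)=1$, and one must still verify that the reduced form of $N_{A,\beta}+N_{C,\delta}$ is indeed $\Diag(A,C)$ (the paper instead extracts, from $M_L=S^T\Diag(A,C)S$ and divisibility, that $\gcd(\beta,d)=C$, so that $L\cdot N_{-\beta,\delta}=Q(0,1)/\gcd(\beta,d)=1$). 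Finally, your fallback idea --- enumerating intersection numbers for forms with $B\neq0$ and checking that $1$ never occurs --- is harder than it sounds: by Prop.~\ref{Sesh-Num-refined} the relevant quantities are $Q(a,b)/\gcd(a\alpha+b\beta,d)$ with an unknown unimodular base change entering the denominator, and controlling that $\gcd$ is precisely the difficulty the paper's direct diagonalization argument is designed to avoid.
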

In particular, there exists an irreducible principal polarization on $E_1\times E_2$ if and only if $d$ can be written as
\be
	d=AC-B^2,\qquad 0\le 2B\le A\le C,\quad \gcd(A,B,C)=1\,.
\ee
As an application we can use Thm.~\ref{introthm-2} to give an alternative proof of Kani's result mentioned above.

Throughout this paper we will work over the complex numbers and on the abelian surface $X=E_1\times E_2$, and we denote by $\varphi:E_1\to E_2$ an isogeny of smallest degree~$d$.

%*****************************************************************************

\section{Preliminaries}

Let $E_1$ and $E_2$ be two isogenous elliptic curves without complex multiplication.
Throughout this paper we will work on the abelian surface $X:=E_1\times E_2$ and we fix an isogeny $\varphi:E_1\to E_2$ of smallest degree $d$.
On the product surface $X$, denote by $F_1=\{0\}\times E_2$  and $F_2=E_1 \times \{0\}$ the fibers of the projections and by $\Delta$ the graph of the isogeny $\varphi$.
The classes of these three elliptic curves on $X$ form a basis of the N\'eron--Severi group $\NS(X)$ (see \cite[Thm.~22]{Weil:var-ab} or \cite[Thm.~11.5.1]{Birkenhake-Lange:CAV}).
\begin{proposition}\label{intersection-matrix}
The intersection matrix of $(F_1,F_2,\Delta)$ on $X$ is given by
\be
	\left(
	\begin{array}{ccc}
		0 & 1 & 1  \\
		1 & 0 & d  \\
		1 & d & 0 
	\end{array}
	\right)
	\,.
\ee
\end{proposition}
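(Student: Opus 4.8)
The plan is to compute each of the six entries of the symmetric $3\times 3$ intersection matrix directly, exploiting that all three curves are elliptic, pass through the origin, and are one-dimensional abelian subvarieties of $X$.

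First I would dispose of the three diagonal entries. Each of $F_1$, $F_2$, $\Delta$ is an elliptic curve, hence a one-dimensional abelian subvariety (equivalently, a subtorus) of $X$. Translation by a point $t\notin C$ carries such a subtorus $C$ to a disjoint coset $C+t$; since translations act trivially on $\NS(X)$, we obtain $C^2=C\cdot(C+t)=0$. This accounts for the zero diagonal.

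Next I would read off two of the off-diagonal entries from the two projections $\mathrm{pr}_i\colon X\to E_i$. The curves $F_1$ and $F_2$ are the fibers of $\mathrm{pr}_1$ and $\mathrm{pr}_2$ over the origin; they meet only at $(0,0)$, transversally, since their tangent directions are the two distinct factors, whence $F_1\cdot F_2=1$. For $F_1\cdot\Delta$ I would use that $\Delta$ is the graph of $\varphi$, so that $\mathrm{pr}_1$ restricts to an isomorphism $\Delta\isom E_1$; thus $\Delta$ meets the fiber $F_1=\mathrm{pr}_1^{-1}(0)$ in the single point $(0,\varphi(0))=(0,0)$, giving $F_1\cdot\Delta=1$.

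The heart of the computation --- and the only place where the degree $d$ enters --- is the entry $F_2\cdot\Delta$. Composing the isomorphism $\Delta\isom E_1$ with $\varphi$ shows that $\mathrm{pr}_2$ restricts on $\Delta$ to a map onto $E_2$ of degree $d$, so that $F_2\cdot\Delta$, being the intersection number of $\Delta$ with the fiber $F_2=\mathrm{pr}_2^{-1}(0)$, equals $\deg(\mathrm{pr}_2|_\Delta)=d$. Concretely, $\Delta\cap F_2=\{(x,0):\varphi(x)=0\}$ is in bijection with $\ker\varphi$. The one point that genuinely uses the hypotheses is that this intersection has length exactly $d$ with every local multiplicity equal to $1$: here I would invoke that we work over $\C$, so $\varphi$ is separable, $\ker\varphi$ is reduced of order $d=\deg\varphi$, and each of the $d$ points is a transversal intersection. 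Assembling the six numbers yields the stated matrix, and I expect this transversality/separability check for the $d$-entry to be the only step requiring real care.
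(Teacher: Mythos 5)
Your proof is correct and takes essentially the same route as the paper: zero diagonal entries because all three curves are elliptic, $F_1\cdot F_2=F_1\cdot\Delta=1$ from transversal intersection at the origin alone, and $F_2\cdot\Delta=\#\ker\varphi=\deg\varphi=d$. You add justifications the paper only asserts (self-intersection zero via translation to a disjoint coset, and transversality plus reducedness of $\ker\varphi$ via separability in characteristic zero), but these are refinements of the same argument, not a different one.
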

Note that the determinant of the intersection matrix coincides with the discriminant of the N\'eron--Severi group on $X$ (see \cite[Chp.~3]{Bauer-Born:Cone}).
Thus implies that $(F_1,F_2,\Delta)$ is a basis of $\NS(X)$.
\begin{proof}
We argue as in \cite[Prop.~2.1]{BGS:integer}.
All three curves are elliptic, so we have
$F_1^2=F_2^2=\Delta^2=0$.
As each curve intersects the other ones transversely, it is enough to count the number of intersection points.
So we have
\be
	F_1\cdot F_2=F_1\cdot \Delta=1\,,
\ee
since these curves intersect only in the origin.
For $F_2$ and $\Delta$ one has
\be
	F_2\cdot\Delta=\#\{(x,0)\,|\,x\in E_1\}\cap\{(x,\varphi (x))\,|\,x\in E_1\}\,,
\ee
and this shows that we need to count the number of solutions $x\in E_1$ of the equation $\varphi (x) = 0$.
But this number is equal to the degree of the isogeny $\varphi$, so we get
\be 
	F_2\cdot\Delta=\deg(\varphi)=d\,.
\ee
\end{proof}

As a first application, we will compute Seshadri constants for line bundles in the cone generated by $(F_1,F_2,\Delta)$:
\begin{proposition}\label{sesh-pos-coeff} 
Let $L$ be a line bundle on $X$ of the form
\be
	L=\mathcal O_{E_1\times E_2}(a_1F_1+a_2F_2+a_3\Delta)
\ee
with non-negative coefficients $a_i$.
Then one has
\be
      \eps(L) & = & \min\set{L\cdot F_1, L\cdot F_2, L\cdot \Delta} \\
              & = & \min\{a_2+a_3, a_1+da_3, a_1+da_2\} \,.
\ee
\end{proposition}

\begin{proof}
   Let $D$ be the divisor $a_1F_1+a_2F_2+a_3\Delta$,
   and let $C$ be
   any irreducible curve $C$ passing through $0$,
   which is not a component of $D$.
   As $D$ is effective, we have
   \be
      \frac{L\cdot C}{\mult_0 C}=\frac{D\cdot C}{\mult_0 C} \ge \frac{\mult_0 D\cdot \mult_0 C}{\mult_0 C} &\ge& a_1+a_2+a_3 \\
      &\ge& a_2+a_3 = L\cdot F_1
      \,.
   \ee
This implies that $\eps(L)$ is computed by one of the curves $F_1, F_2$ or $\Delta$.
Their intersection numbers with $L$ are given by the intersection matrix in Prop.~\ref{intersection-matrix} and this yields the assertion.
\end{proof}

\begin{remark}
Let $\hat\varphi:E_2\to E_1$ be the isogeny corresponding to $\varphi$ such that the maps $\varphi\circ\hat\varphi$ and $\hat\varphi\circ \varphi$ are the multiplication by $d$.
Then the previous arguments can be used to see that the triple $(F_1,F_2,\hat\Delta)$ with $\hat\Delta:=\{ x\in X \,|\, x_1=\hat\varphi(x_2)\}$ also forms a basis of $\NS(X)$.
Then, we can analogously formulate Prop.~\ref{sesh-pos-coeff} for line bundles in the cone generated by $(F_1,F_2,\bar\Delta)$.
In general the resulting cones will not coincide since it is possible to get negative coefficients by changing bases:
\be
	F_1+\Delta\equiv dF_1 +(1-d)F_2+\bar\Delta\,,\\
	F_2+\bar\Delta\equiv (1-d)F_1 +dF_2+\Delta\,.
\ee
In fact, the cones coincide if and only if $d=1$, that is, if $E_1$ and $E_2$ are isomorphic.
\end{remark}
For our purpose it will be useful to change the basis $(F_1,F_2,\Delta)$ of the N\'eron--Severi group by choosing an element which is orthogonal to $F_1$ and $F_2$.
We define $\nabla:=\Delta-dF_1-F_2$, then the triple $(F_1,F_2,\nabla)$ forms a basis of $\NS(X)$ and the intersection matrix is given by
\be
		\left(
		\begin{array}{ccc}
			0 & 1 & 0 \\
			1 & 0 & 0 \\
			0 & 0 & -2d  
		\end{array}
		\right)
		\,.
\ee

\begin{proposition}\label{ample-lb}
Consider a line bundle $L=\mathcal O_{X}(a_1F_1+a_2F_2+a_3\nabla)$.
Then $L$ is ample if and only if the two inequalities
\be
	0 &<& a_1\,, \\
	0 &<& a_1a_2-da_3^2=\frac{L^2}{2}\,,
\ee
are satisfied.
\end{proposition}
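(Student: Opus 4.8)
The plan is to combine a one-line computation of $L^2$ with the standard description of the ample cone on an abelian surface as a connected component of the positive cone. First I would read $L^2$ off the intersection matrix of $(F_1,F_2,\nabla)$ displayed just above the statement: expanding $L=a_1F_1+a_2F_2+a_3\nabla$ against that Gram matrix gives $L^2=2a_1a_2-2da_3^2=2(a_1a_2-da_3^2)$. Thus the second asserted inequality is literally the condition $L^2>0$, and the identity $a_1a_2-da_3^2=L^2/2$ is automatic. While computing, I would also record $L\cdot F_1=a_2$ and $L\cdot F_2=a_1$, again directly from the matrix.

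Necessity is then immediate. If $L$ is ample it lies in the positive cone, so $L^2>0$; and intersecting with the effective elliptic curve $F_2$ gives $L\cdot F_2=a_1>0$. This yields both asserted inequalities at once.

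For sufficiency I would invoke the Hodge index theorem: the intersection form on $\NS(X)_{\R}$ has signature $(1,2)$, so $\set{x\mid x^2>0}$ has exactly two connected components, interchanged by $x\mapsto -x$ and distinguished by the sign of $x\cdot H$ for any fixed ample class $H$. On an abelian surface the ample cone is precisely the component containing the ample classes; this rests on the Nakai--Moishezon criterion together with the fact that every irreducible curve $C$ on an abelian surface satisfies $C^2\ge 0$, so that the standard reverse Cauchy--Schwarz inequality forces $L\cdot C>0$ for any $L$ in the positive component lying on the same side of $H$ (here one may instead cite the ampleness criterion for line bundles on abelian varieties via positive definiteness of the associated Hermitian form, cf.\ \cite{Birkenhake-Lange:CAV}). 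To pin down which component corresponds to $a_1>0$, I would evaluate on one explicitly ample class, namely the product principal polarization $F_1+F_2$, whose coordinates in the basis $(F_1,F_2,\nabla)$ are $(1,1,0)$, so that $a_1=1>0$ there. Since $L^2>0$ already forces $a_1a_2>da_3^2\ge 0$ and hence $a_1a_2>0$, the sign of $a_1$ is constant on each component, and $a_1>0$ therefore singles out exactly the ample one. Concretely, $a_1>0$ together with $a_1a_2>0$ gives $a_2>0$, whence $L\cdot(F_1+F_2)=a_1+a_2>0$ places $L$ on the same side as the ample class $F_1+F_2$; combined with $L^2>0$ this gives that $L$ is ample.

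The main obstacle is the justification of the statement that the ample cone equals the $H$-positive component of the positive cone: one must use that abelian surfaces contain no curves of negative self-intersection and feed this into Nakai--Moishezon. I expect to either prove the required reverse Cauchy--Schwarz lemma from Hodge index by hand or to cite the classical ampleness criterion on abelian varieties. Everything else is elementary linear algebra with the given $3\times 3$ intersection matrix.
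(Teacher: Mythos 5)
Your proposal is correct and takes essentially the same route as the paper: the paper likewise computes $L^2=2(a_1a_2-da_3^2)$, cites the Nakai--Moishezon criterion for abelian surfaces from \cite{Birkenhake-Lange:CAV} (ampleness $\iff$ $L^2>0$ and $L\cdot L_0>0$ for the fixed ample class $L_0=F_1+F_2$), and closes with the identical sign argument that $L^2>0$ forces $a_1a_2>da_3^2\ge 0$, so $a_1+a_2>0$ if and only if $a_1>0$. The only difference is that you unwind the justification of that criterion (Hodge index, absence of negative curves) where the paper simply cites it, which is fine but not a different proof.
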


\begin{proof}
By the Nakai-Moishezon criterion for abelian varieties \cite[Cor.~4.3.3]{Birkenhake-Lange:CAV} a line bundle $L$ is ample if and only if both $L^2= 2(a_1a_2-da_3^2)$ and the intersection of $L$ with any fixed ample line bundle $L_0$ are positive.
By choosing the ample line bundle $L_0:=\mathcal{O}_{E\times E}(F_1+F_2)$ the intersection of $L$ with $L_0$ is given by $a_1+a_2$.
Now assume that $L^2$ is positive, then $a_1$ and $a_2$ are either both positive or both negative. Thus, the sum $a_1+a_2$ is positive if and only if $a_1$ is positive.
\end{proof}

%*****************************************************************************

\section{Numerical classes of elliptic curves on $X$}

In this section we will determine the intersection number of line bundles with elliptic curves.
To this end, we need to know all elliptic curves on $E_1\times E_2$.
In \cite{Hayashida-Nishi:genus-two} Hayashida and Nishi described the elliptic curves on $E^n$ and with the same argument it immediately follows:

\begin{lemma}\label{Haya-Nishi}
For every elliptic curve $N$ on $X=E_1\times E_2$ there are endomorphisms $\sigma_1\in\End(E_1)\cong\Z$ and $\sigma_2\in\End(E_2)\cong\Z$ such that $N$ is a translate of the image of the map
\be
	E_1\to E_1\times E_2,\qquad x\mapsto (\sigma_1(x), \sigma_2\circ\varphi (x)).
\ee
\end{lemma}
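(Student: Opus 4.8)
The plan is to first reduce to the case where $N$ passes through the origin: after a translation a smooth genus-$1$ curve on the abelian surface $X$ becomes a one-dimensional abelian subvariety (a subtorus) of $X$, so it suffices to produce the asserted description for such $N$, the translation accounting for the word ``translate'' in the statement. Fix the inclusion $\iota:N\hookrightarrow X$.

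The key algebraic input I would record next is the structure of the relevant Hom-modules. Since $E_1$ and $E_2$ have no complex multiplication, $\End(E_i)\cong\Z$ acts by the multiplication maps. Moreover $\Hom(E_1,E_2)$ is a free $\Z$-module of rank $1$: it is nonzero because the curves are isogenous, and tensoring with $\Q$ and composing with a fixed isogeny $E_2\to E_1$ embeds it into $\End(E_1)\otimes\Q\cong\Q$. I would then check that the minimal-degree isogeny $\varphi$ is a \emph{generator}: if $\psi$ generates and $\varphi=k\psi$, then $\deg\varphi=k^2\deg\psi$, so minimality forces $k=\pm1$. Consequently every homomorphism $E_1\to E_2$ has the form $\sigma_2\circ\varphi$ with $\sigma_2\in\End(E_2)\cong\Z$, which is precisely the shape of the second coordinate in the statement.

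With these preliminaries I would analyse $N$ through the two projections $\pi_i:=p_i|_N:N\to E_i$, whose pair determines $\iota$. If $\pi_1=0$ then $N\subseteq F_1$, hence $N=F_1$ by dimension, and the choice $\sigma_1=0$, $\sigma_2=1$ realizes $F_1=\{0\}\times E_2$ as the required image (using that $\varphi$ is surjective). Otherwise $\pi_1$ is an isogeny of some degree $m$, and I would pass to its dual $\hat\pi_1:E_1\to N$ with $\pi_1\circ\hat\pi_1=[m]$. Composing, $\iota\circ\hat\pi_1:E_1\to X$ sends $x\mapsto([m]x,\ \pi_2\hat\pi_1(x))$, and since $\hat\pi_1$ is surjective its image is exactly $N$. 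Reading off $\sigma_1:=[m]$ and writing the homomorphism $\pi_2\circ\hat\pi_1:E_1\to E_2$ as $\sigma_2\circ\varphi$ by the previous paragraph yields the desired parametrization.

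The main obstacle I anticipate is not the projection bookkeeping but pinning down the Hom-module structure cleanly, that is, verifying that every homomorphism $E_1\to E_2$ factors through the single isogeny $\varphi$. This is exactly where both hypotheses enter: the absence of complex multiplication forces $\Hom(E_1,E_2)$ to have rank $1$, and the minimality of $\deg\varphi$ is what makes $\varphi$ a generator rather than a proper multiple. Once this is in hand, the dual-isogeny step converts the intrinsic description of $N$ into the $E_1$-parametrization demanded by the lemma with no further difficulty.
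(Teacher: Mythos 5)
Your proof is correct. Note that the paper gives no actual proof of this lemma: it cites Hayashida--Nishi and asserts that their argument for elliptic curves on $E^n$ carries over, so your write-up supplies exactly the details that the citation leaves implicit, and it does so along the standard lines that citation refers to. All the key steps check out: a genus-one curve through the origin of an abelian variety is an abelian subvariety (Albanese universal property); $\Hom(E_1,E_2)$ is torsion-free of rank one because composing with an isogeny $E_2\to E_1$ embeds it into $\End(E_1)\otimes\Q\cong\Q$, and the identity $\deg(k\psi)=k^2\deg\psi$ shows the minimal-degree isogeny $\varphi$ generates it --- this is precisely where both hypotheses (no CM, minimality of $d$) enter; finally, composing the dual isogeny $\hat\pi_1:E_1\to N$ with the inclusion $N\hookrightarrow X$ exhibits $N$ as the image of $x\mapsto([m]x,\,(\pi_2\circ\hat\pi_1)(x))$ with $[m]\in\End(E_1)$ and $\pi_2\circ\hat\pi_1\in\Z\varphi$. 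Your case analysis also covers the degenerate cases correctly: $\pi_1=0$ gives $N=F_1$ with $(\sigma_1,\sigma_2)=(0,1)$, and the possibility $\pi_2\circ\hat\pi_1=0$ (i.e.\ $N=F_2$) is absorbed by allowing $\sigma_2=0$.
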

By the previous lemma we know that for every elliptic curve $N$ on $X$, there are integers $(a,b)\neq (0,0)$ such that $N$ is a translate of 
\be 
	N_{a,b}:=\{(ax,b\varphi(x)\mid x\in E_1\}\,.
\ee
Note that for any multiple $(\lambda a,\lambda b)$ we have $N_{a,b}=N_{\lambda a, \lambda b}=m_\lambda(N_{a,b})$, where $m_\lambda$ describes the multiplication with $\lambda$, which is a map of degree $\lambda^2$ that maps $N_{a,b}$ surjective onto itself.

Next, we want to describe the intersection numbers of elliptic curves with line bundles.
For this, we have to determine the numerical class of the elliptic curves $N_{a,b}\,$.
As $N_{a,b}$ and $F_1$ intersect transversely for $a\neq 0$, we have
\be 
	N_{a,b}\cdot F_1 = \#(N_{a,b}\cap F_1)=\frac{\#\{(x\in E_1 \mid ax=0\}}{\deg(\sigma_{a,b})}\,,
\ee
where $\sigma_{a,b}:E_1\to N_{a,b}$ is the map $x\mapsto (ax,b\varphi(x))$.
But the numerator is equal to the degree of the map $a:E_1\to E_1\,, x\mapsto ax$, which has degree $a^2$.
If $a=0$, then $N_{a,b}=F_1$ and, therefore, $N_{a,b}\cdot F_1=0=\frac{a^2}{\deg(\sigma_{a,b})}$. 
With the same arguments we obtain
\be 
	N_{a,b}\cdot F_1 = \frac{a^2}{\deg(\sigma_{a,b})}\,,  \qquad   N_{a,b}\cdot F_2 = \frac{b^2d}{\deg(\sigma_{a,b})}\,,\qquad
	N_{a,b}\cdot \Delta = \frac{(b-a)^2d}{\deg(\sigma_{a,b})}\,,
\ee
and since $\nabla=\Delta-dF_1-F_2$ we have
\be
	N_{a,b}\cdot \nabla &= \frac{-2abd}{\deg(\sigma_{a,b})}\,.
\ee
The coefficients of the numerical representation of $N_{a,b}$ are then given by
\be 
	\frac{1}{\deg(\sigma_{a,b})}\left(
		\begin{array}{ccc}
			0 & 1 & 0 \\
			1 & 0 & 0 \\
			0 & 0 & -2d  
		\end{array}
	\right)^{-1}
	\cdot \left(
		\begin{array}{c}
            a^2 \\
            b^2d \\
            -2abd 
		\end{array}
      	\right)
      	=\frac{(b^2d,a^2,ab)^T}{\deg(\sigma_{a,b})}\,,
\ee
and, thus, we have
\be 
	N_{a,b}\equiv \frac{b^2d}{\deg(\sigma_{a,b})}F_1 + \frac{a^2}{\deg(\sigma_{a,b})}F_2 + \frac{ab}{\deg(\sigma_{a,b})}\nabla\,.
\ee

To fully understand the intersection numbers we have to determine the degree of the map $\sigma_{a,b}$.
For this, we use the following criteria to identify elliptic curves on abelian varieties (see \cite[Prop.~2.1, Prop.~2.3]{Kani:elliptic}):

\begin{proposition}[\cite{Kani:elliptic}]\label{crit-elli-1}
Let $A$ be an abelian surface and $C_1,C_2\subset A$ be two irreducible curves on $A$.
Then $C_1\cdot C_2\ge 0$.
Furthermore, $C_1\cdot C_2 =0$ if and only if $C_1$ is an elliptic curve and $C_2$ a translate of $C_1$.
\end{proposition}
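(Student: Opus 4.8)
The plan is to separate the inequality from the characterization of equality, and within the latter to treat the self-intersection case $C_1=C_2$ apart from the case of two distinct curves. For the inequality, if $C_1$ and $C_2$ are distinct irreducible curves then they share no component, so they meet in finitely many points and $C_1\cdot C_2$ is the sum of the (non-negative) local intersection multiplicities; in particular $C_1\cdot C_2=0$ exactly when $C_1\cap C_2=\emptyset$. If $C_1=C_2=:C$, then since the canonical class of an abelian surface is trivial, adjunction gives $C^2=2p_a(C)-2$. As a complex torus contains no rational curves, the geometric genus of $C$ is at least $1$, forcing $p_a(C)\ge 1$ and hence $C^2\ge 0$, with equality if and only if $p_a(C)=1$, i.e.\ if and only if $C$ is elliptic (a curve of arithmetic genus one on $A$ cannot be singular, as its normalization would otherwise be rational). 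This proves $C_1\cdot C_2\ge 0$; it settles the implication ``$\Leftarrow$'' (translation is an automorphism preserving numerical equivalence, so a translate $C_2=C_1+a$ of an elliptic curve satisfies $C_1\cdot C_2=C_1^2=0$); and it disposes of the equality statement when $C_1=C_2$.

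It remains to show that two \emph{distinct} irreducible curves with $C_1\cdot C_2=0$, i.e.\ with $C_1\cap C_2=\emptyset$, must be elliptic and translates of one another. Here I would study the difference morphism $\mu:C_1\times C_2\to A$ given by $(x,y)\mapsto x-y$. Disjointness means precisely that $0\notin\mu(C_1\times C_2)$. Since $C_1\times C_2$ is complete, the image $D:=\mu(C_1\times C_2)$ is closed; it cannot equal $A$ (else it would contain $0$) and it cannot be a point (else $x-y$ would be constant, forcing $C_1$ to be a point), so $D$ is an irreducible curve avoiding $0$. For each fixed $y\in C_2$ the translate $C_1-y$ is an irreducible curve contained in $D$, whence $D=C_1-y$ for every $y\in C_2$. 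Comparing these equalities shows that $C_1$ is invariant under translation by $y-y'$ for all $y,y'\in C_2$.

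Fixing a basepoint $y^*\in C_2$, the stabilizer $H:=\set{a\in A\mid C_1+a=C_1}$ is a closed subgroup containing the irreducible curve $C_2-y^*$, which passes through $0$ and hence lies in the identity component $H^0$. Thus $H^0$ is a subtorus of positive dimension; it cannot be all of $A$ (a curve is not invariant under the full group), so $H^0$ is a one-dimensional subtorus, i.e.\ an elliptic curve. Since $C_1$ is $H^0$-invariant, each orbit $x+H^0$ with $x\in C_1$ is a curve contained in the irreducible curve $C_1$, forcing $C_1=x+H^0$; in particular $C_1$ is elliptic. Finally $C_2-y^*$ is an irreducible curve contained in the irreducible curve $H^0=C_1-x$, so $C_2-y^*=H^0$ and therefore $C_2=C_1+(y^*-x)$ is a translate of $C_1$.

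I expect the difference-morphism step to be the crux: one must argue cleanly that the image is \emph{exactly} one-dimensional and then convert the family of identities $D=C_1-y$ into both the translation-invariance of $C_1$ and the subtorus structure of its stabilizer. The one delicate piece of book-keeping is ensuring, via the basepoint $y^*$, that $C_2-y^*$ lands in the identity component $H^0$ rather than in another component of $H$, which is exactly what yields that $C_2$ is a translate of $C_1$ itself.
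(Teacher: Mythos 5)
Your proof is correct, but there is no in-paper argument to compare it against: the paper quotes this proposition directly from Kani (\cite{Kani:elliptic}, Prop.~2.1 and 2.3) and uses it as a black box --- its only role is to pin down $\deg(\sigma_{a,b})$ in Lemma~\ref{numclass-elli} --- so you should view your argument as a self-contained substitute for that citation, and as such it holds up. The case split is sound: for $C_1=C_2=C$, adjunction with trivial canonical class gives $C^2=2p_a(C)-2$, and the absence of rational curves on an abelian variety forces $p_a(C)\ge 1$, with equality exactly for smooth curves of genus one; for distinct irreducible curves the intersection number is a sum of positive local multiplicities, so its vanishing means disjointness, and your difference-morphism argument correctly converts disjointness into the translate statement: $D=\mu(C_1\times C_2)$ is closed and irreducible (image of a complete irreducible variety) of dimension exactly one, so the inclusions $C_1-y\subseteq D$ force $D=C_1-y$ for every $y\in C_2$, whence $C_2-y^*$ lies in the stabilizer $H$ of $C_1$, in fact in its identity component $H^0$ since it is connected and passes through $0$; then $H^0$ is a one-dimensional abelian subvariety, $C_1=x+H^0$, and $C_2-y^*=H^0$, which is precisely the assertion. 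Two small points worth polishing in a final write-up: first, in the self-intersection case, to obtain ``elliptic curve'' in the sense the paper uses it (a translate of a one-dimensional abelian subvariety, as in Lemma~\ref{Haya-Nishi}), add the standard remark that a smooth genus-one curve through $0$ in $A$ is automatically a subtorus, by the universal property of its Jacobian; second, the closedness of the stabilizer $H$ deserves a line, e.g.\ the set $\{a\in A\mid a+C_1\subseteq C_1\}$ equals $\bigcap_{x\in C_1}(C_1-x)$, hence is closed, and it coincides with $H$ because $a+C_1$ is an irreducible curve inside the irreducible curve $C_1$. Neither is a gap; both are one-sentence fixes.
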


\begin{proposition}[\cite{Kani:elliptic}]\label{crit-elli-2}
Let $A$ be an abelian surface and $D\in\Div(A)$ a divisor on $A$.
Then $D\equiv mE$ for some $m\in\Z$ and some elliptic curve $E\subset A$ if and only if $D^2=0$.
Moreover, $m>0$ if and only if $\theta\cdot D>0$ for an ample divisor $\theta\in\Div(A)$.
\end{proposition}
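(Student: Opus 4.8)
The plan is to prove the two implications separately, with essentially all of the work sitting in the converse. The forward direction is immediate: if $D\equiv mE$ for an elliptic curve $E\subset A$, then a generic translate $E_t$ of $E$ is disjoint from $E$ and numerically equivalent to it, so $E^2=E\cdot E_t=0$ and hence $D^2=m^2E^2=0$.

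For the converse I would pass from the numerical datum $D^2=0$ to an honest elliptic curve using the description of line bundles on the abelian surface $A=V/\Lambda$. If $D\equiv 0$ we take $m=0$, so assume $D\not\equiv 0$ and let $H$ be the Hermitian form on $V$ attached to $D$ under the identification of $\NS(A)$ with the integral Hermitian forms, i.e.\ those whose imaginary part is $\Z$-valued on $\Lambda$ (see \cite{Birkenhake-Lange:CAV}). The point is that $D^2=0$ is equivalent to the degeneracy of $H$, so $H$ is nonzero of rank one and $\ker H\subset V$ is a complex line. First I would check that $\ker H$ is \emph{rational}: the imaginary part $\mathrm{Im}\,H$ is an integer-valued alternating form on $\Lambda$, whose radical is a rank-two sublattice of $\Lambda$ spanning $\ker H$ over $\R$. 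Hence $B:=\ker H/(\ker H\cap\Lambda)$ is a one-dimensional subtorus of $A$, that is, an elliptic curve, and the numerical class of $D$ is pulled back along the quotient $\pi\colon A\to A/B=:E'$ from a class on the elliptic curve $E'$. Since $\NS(E')\cong\Z$ is generated by the class of a point $P$, we get $D\equiv m\,\pi^*[P]=mE$, where the fibre $E=\pi^{-1}(P)$ is a translate of $B$ and thus an elliptic curve. This yields the first assertion.

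For the sign statement I would argue directly. An elliptic curve $E\subset A$ is an irreducible curve, so $\theta\cdot E>0$ for every ample class $\theta$. Writing $D\equiv mE$ then gives $\theta\cdot D=m\,(\theta\cdot E)$ with $\theta\cdot E>0$, so the sign of $\theta\cdot D$ equals the sign of $m$; in particular $m>0$ if and only if $\theta\cdot D>0$, and this characterization does not depend on the chosen ample $\theta$.

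The main obstacle is the manufacture of an actual curve in the converse, and within it the genuinely substantive point is the rationality of $\ker H$: it is exactly here that the \emph{integrality} encoded in $\NS(A)$ is used, since a merely real degenerate Hermitian form would produce only a real subspace of $V$ rather than an abelian subvariety. One might hope to bypass the transcendental language by replacing $D$ with $-D$ if necessary (legitimate by the Hodge index theorem, which forces $\theta\cdot D\neq0$ once $D^2=0$ and $D\not\equiv0$) and then proving $D$ effective via Riemann--Roch; but Riemann--Roch only delivers $\chi(\O_A(D))=\tfrac12 D^2=0$, from which effectivity is not automatic, and one would still need to descend to a single irreducible fibre. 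I therefore expect the Hermitian-form descent to be the cleanest route.
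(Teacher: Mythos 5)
The paper itself offers no proof of this proposition: it is quoted directly from Kani \cite{Kani:elliptic}, so there is no in-paper argument to measure yours against; what follows assesses your proof on its own terms and against Kani's original treatment. Your argument is correct. The forward implication and the sign statement are as routine as you present them, and the converse is complete: with $A=V/\Lambda$ and $\NS(A)$ identified with the Hermitian forms $H$ on $V$ whose imaginary part is $\Z$-valued on $\Lambda$, Riemann--Roch gives $D^2=2\chi(D)=2\,\mathrm{Pf}\bigl(\mathrm{Im}\,H|_\Lambda\bigr)$, so $D^2=0$ is indeed equivalent to degeneracy of $H$; your key rationality step is sound, because the radical of $\mathrm{Im}\,H$ is the kernel of a linear map defined over $\Q$ on $\Lambda\otimes\Q$, and $\ker_\R(\mathrm{Im}\,H)=\ker_\C(H)$ (using $\mathrm{Im}\,H(ix,iy)=\mathrm{Im}\,H(x,y)$), so $\ker H\cap\Lambda$ is a rank-two lattice spanning $\ker H$; the form then descends to a nondegenerate integral Hermitian form on $V/\ker H$, whence $[D]=\pi^*(m[P])=m[\pi^{-1}(P)]$ with $\pi^{-1}(P)$ a translate of the subtorus $B$. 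This is a genuinely different route from Kani's: his paper works algebraically over an algebraically closed field of arbitrary characteristic, deducing the statement from intersection theory and the structure theory of (possibly degenerate) line bundles on abelian varieties---in effect producing an effective representative of the class and then invoking the equality case of Prop.~\ref{crit-elli-1}, so that all components are mutually disjoint translates of one elliptic curve---whereas your proof is intrinsically complex-analytic. What each buys: Kani's argument is characteristic-free and meshes with the companion Prop.~\ref{crit-elli-1}; yours is shorter, self-contained, and entirely adequate for the present paper, which works over $\C$ throughout.

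One phrasing caveat, not a gap: disposing of the case $D\equiv 0$ by ``take $m=0$'' tacitly requires some elliptic curve to exist on $A$ (there is none when $A$ is simple), so strictly the converse should be read for $D\not\equiv 0$, the trivial class being a degeneracy of the statement itself rather than of your proof; on $X=E_1\times E_2$, the only surface where the proposition is applied here, this is immaterial.
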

With these criteria we can calculate the degree of $\sigma_{a,b}$ and, thus, the numerical class of elliptic curves.

\begin{lemma}\label{numclass-elli}
Let $N\subset X$ be an elliptic curve and let $\sigma_{a,b}:E_1\to N_{a,b}$ be a map $x\mapsto (ax,b\varphi(x))$ such that the image $N_{a,b}$ is a translate of $N$.
Then we have 
\be 
	\deg(\sigma_{a,b})=\gcd(a^2,b^2d,ab)
\ee
and, therefore, the numerical class of $N$ is given by
\be 
	N\equiv N_{a,b}\equiv \frac{b^2dF_1 + a^2F_2 + ab\nabla}{\gcd(a^2,b^2d,ab)}\,.
\ee
\end{lemma}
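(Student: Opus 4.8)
The plan is to determine $\deg(\sigma_{a,b})$ by pairing a cheap divisibility bound with the two curve-recognition criteria. Since $N_{a,b}$ is a translate of $N$, the two are numerically equivalent, so it suffices to treat $N_{a,b}$. First I would record the easy half: because $(F_1,F_2,\nabla)$ is an \emph{integral} basis of $\NS(X)$ and $[N_{a,b}]\in\NS(X)$, the coordinates $(b^2d,\,a^2,\,ab)/\deg(\sigma_{a,b})$ computed just above must all be integers. Hence $\deg(\sigma_{a,b})$ divides each of $a^2$, $b^2d$ and $ab$, and therefore $\deg(\sigma_{a,b})\mid\gcd(a^2,b^2d,ab)$. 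The whole statement then reduces to the reverse divisibility, which is exactly the claim that the class $[N_{a,b}]$ is \emph{primitive} in the lattice $\NS(X)$.

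To establish primitivity I would show, more generally, that the class of any irreducible elliptic curve on $X$ is primitive, and apply this to $N_{a,b}$. Write $[N_{a,b}]=k\,w$, where $w$ is the primitive integral generator of the ray through $[N_{a,b}]$ and $k\ge 1$. As $N_{a,b}$ is elliptic we have $N_{a,b}^2=0$, hence $w^2=0$; moreover $w$ pairs positively with any ample class $\theta$, since $N_{a,b}$ is effective and so $\theta\cdot w=(\theta\cdot N_{a,b})/k>0$. By Prop.~\ref{crit-elli-2} there is then an elliptic curve $E$ with $[E]=w$. Now $N_{a,b}\cdot E=k\,w^2=0$, so Prop.~\ref{crit-elli-1} forces $N_{a,b}$ to be a translate of $E$. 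Translates are numerically equivalent, whence $[N_{a,b}]=[E]=w$ and thus $k=1$.

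Putting the two halves together gives $\deg(\sigma_{a,b})=\gcd(a^2,b^2d,ab)$, and substituting this into the expression $N_{a,b}\equiv(b^2dF_1+a^2F_2+ab\nabla)/\deg(\sigma_{a,b})$ from the discussion preceding the lemma yields the asserted numerical class. The main obstacle is the passage from divisibility to equality, i.e.\ the primitivity step: the divisibility $\deg(\sigma_{a,b})\mid\gcd$ is immediate, but upgrading it relies on the geometric fact that the primitive ray-generator is realized by a genuine elliptic curve (Prop.~\ref{crit-elli-2}), of which $N_{a,b}$ must then be a translate (Prop.~\ref{crit-elli-1}). A more computational alternative would evaluate $\#\ker(\sigma_{a,b})$ directly; there the crux instead becomes proving that minimality of $d$ makes $\ker\varphi$ cyclic, after which the kernel count reproduces the same gcd. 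I would prefer the criteria-based argument, which sidesteps that case analysis entirely.
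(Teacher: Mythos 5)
Your proof is correct and follows essentially the same route as the paper: both decompose $[N_{a,b}]$ as an integer multiple of a primitive integral class, apply Prop.~\ref{crit-elli-2} to realize that primitive class by an elliptic curve $E$, and then use $N_{a,b}\cdot E=0$ together with Prop.~\ref{crit-elli-1} to conclude that $N_{a,b}$ is a translate of $E$, which forces the multiplier to equal $1$. The only cosmetic difference is that the paper carries the factor $m$ from Prop.~\ref{crit-elli-2} to the very end (concluding $m\lambda=1$), whereas you dispose of it immediately via the primitivity of $w$.
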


\begin{proof}
By our previous arguments the numerical class of $N_{a,b}$ is given by
\be 
	N_{a,b}\equiv \frac{b^2d}{\deg(\sigma_{a,b})}F_1 + \frac{a^2}{\deg(\sigma_{a,b})}F_2 + \frac{ab}{\deg(\sigma_{a,b})}\nabla\,.
\ee
Since all the coefficients of the representation must be integers, it follows that $\deg(\sigma_{a,b})$ is a divisor of $b^2d, a^2$ and $ab$. 
Thus, $\deg(\sigma_{a,b})$ is a divisor of $\gcd(a^2,b^2d,ab)$.
So, there exists a positive integer $\lambda$ such that 
\be 
	\lambda\deg(\sigma_{a,b})=\gcd(a^2,b^2d,ab)\,.
\ee
If we show that $\lambda=1$, then the assertion follows.
By using the previous equation in the expression for $N_{a,b}$ we get
\be 
	N_{a,b}\equiv \lambda\left(\frac{b^2d}{\gcd(a^2,b^2d,ab)}F_1 + \frac{a^2}{\gcd(a^2,b^2d,ab)}F_2 + \frac{ab}{\gcd(a^2,b^2d,ab)}\nabla\right)\equiv\lambda \bar{N}\,,
\ee
where $\bar{N}$ is an element of $\NS(X)$, since all the coefficients are integers.

Next, we make use of Prop.~\ref{crit-elli-2} to show that $\bar{N}$ is a positive multiple of an elliptic curve.
Since $N_{a,b}$ is an elliptic curve by assumption, we can apply Prop.~\ref{crit-elli-2} to $N_{a,b}$ and by using $\bar{N}\equiv \frac{1}{\lambda}N_{a,b}$ we see that
\be 
	\bar{N}^2=\frac{1}{\lambda^2}N_{a,b}^2=0
\ee 
and
\be 
	\bar{N}\cdot (F_1+F_2)=\frac{1}{\lambda} N_{a,b}\cdot (F_1+F_2)>0\,.
\ee 
Thus, by Prop.~\ref{crit-elli-2} we have $\bar{N}\equiv mE$ for an elliptic curve $E$ and $m\ge 1$.
So, we obtain $N_{a,b}\equiv m \lambda E$.

Since $N_{a,b}\cdot E = \frac{1}{m\lambda} N_{a,b}^2=0$, it follows from Prop~\ref{crit-elli-1} that $E$ is a translate of $N_{a,b}$ and, therefore, their numerical classes coincide. 
But since $m$ and $\lambda$ are positive integers, it follows that $m=\lambda =1$.
So, we have $\deg(\sigma_{a,b})=\gcd(a^2,b^2d,ab)$ as claimed.
\end{proof}

The natural question might arise whether it is possible to find for any given pair $(a,b)$ another pair $(a',b')$ such that the elliptic curves $N_{a,b}$ and $N_{a',b'}$ coincide and such that the degree of the map $\sigma_{a',b'}:E_1\to N_{a',b'}$ is $1$.
While this is apparently true for $d=1$ by dividing $a$ and $b$ by $\gcd(a,b)$, it is not true for $d>1$, since it would imply that there exists a map of degree $1$, i.e. an isomorphism, of $E_1$ to $N_{0,1}\cong E_2$.

Now we have available the required tools to calculate the intersection numbers of elliptic curves with line bundles:

\begin{proposition}\label{intersection-elli-lb}
	Let $L$ be a line bundle with $L\equiv a_1F_1+a_2F_2+a_3\nabla$ and $N_{a,b}$ an elliptic curve on $X$.
	Then the intersection number of $L$ and $N_{a,b}$ is given by
	\be
		L\cdot N_{a,b}= \frac{1}{\gcd(a^2,db^2,ab)}
		\left(
		\begin{array}{c}
            a \\
            b 
		\end{array}
      	\right)^T
      	\left(
         \begin{array}{cc}
            a_1 & -da_3   \\
            -da_3 & da_2
         \end{array}
		\right)
      	\left(
		\begin{array}{c}
            a    \\
            b 
		\end{array}
		\right)
		\,.
	\ee
\end{proposition}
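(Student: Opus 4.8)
The plan is to reduce the claim to a single bilinear computation, since all the geometric content has already been extracted. The two inputs I would use are the numerical class of $N_{a,b}$ furnished by Lemma~\ref{numclass-elli} and the intersection matrix of the basis $(F_1,F_2,\nabla)$ recorded just before Prop.~\ref{ample-lb}. Writing $g:=\gcd(a^2,b^2d,ab)$, Lemma~\ref{numclass-elli} gives
$$
N_{a,b}\equiv\frac{1}{g}\left(b^2d\,F_1+a^2\,F_2+ab\,\nabla\right),
$$
while by hypothesis $L\equiv a_1F_1+a_2F_2+a_3\nabla$. Because the intersection pairing on $\NS(X)$ is bilinear, $L\cdot N_{a,b}$ is obtained by pairing the coordinate vectors of $L$ and $N_{a,b}$ through the intersection matrix
$$
\left(\begin{array}{ccc} 0 & 1 & 0 \\ 1 & 0 & 0 \\ 0 & 0 & -2d \end{array}\right).
$$

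The next step is the actual multiplication. Applying this matrix to the coordinate column $\frac{1}{g}(b^2d,a^2,ab)^T$ of $N_{a,b}$ and then taking the dot product with the row $(a_1,a_2,a_3)$ of $L$, the two off-diagonal entries equal to $1$ contribute the $F_1\!\cdot\!F_2$ terms and the entry $-2d$ contributes the $\nabla^2$ term, so I would arrive at
$$
L\cdot N_{a,b}=\frac{1}{g}\left(a_1a^2+da_2b^2-2da_3ab\right).
$$

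It then remains only to recognize this as the binary quadratic form in the statement. Expanding
$$
\left(\begin{array}{cc} a & b \end{array}\right)
\left(\begin{array}{cc} a_1 & -da_3 \\ -da_3 & da_2 \end{array}\right)
\left(\begin{array}{c} a \\ b \end{array}\right)
= a_1a^2-2da_3ab+da_2b^2
$$
shows that it matches the previous expression term by term, which completes the proof. I do not expect any genuine obstacle here, since Lemma~\ref{numclass-elli} already pinned down $\deg(\sigma_{a,b})=g$ and hence the exact class of $N_{a,b}$; the only point needing a little care is the bookkeeping of the factor $d$ on the cross term when passing from the three-variable pairing to the symmetric $2\times2$ matrix, where the off-diagonal entry $-da_3$ must be chosen so that its symmetric appearance reproduces the coefficient $-2da_3ab$.
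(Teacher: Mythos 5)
Your proposal is correct and is exactly the paper's argument: the paper's proof consists of the single sentence that the result ``follows by explicitly calculating the intersection of the numerical class of $N_{a,b}$ given in Lemma~\ref{numclass-elli} with the numerical class of $L$,'' which is precisely the computation you carry out. Your version simply makes the bilinear pairing through the matrix $\Diag$-free intersection form of $(F_1,F_2,\nabla)$ explicit, and the arithmetic $a_1a^2+da_2b^2-2da_3ab$ matches the stated quadratic form, so there is nothing to add.
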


\begin{proof}
This follows by explicitly calculating the intersection of the numerical class of $N_{a,b}$ given in Lemma \ref{numclass-elli} with the numerical class of $L$.
\end{proof}

\begin{remark}\label{NS-Matrix}
We denote the matrix associated with $L\equiv a_1F_1+a_2F_2+a_3\nabla\in\NS(X)$ through Prop.~\ref{intersection-elli-lb} by
\be 
M_L:=\left(
         \begin{array}{cc}
            a_1 & -da_3   \\
            -da_3 & da_2
         \end{array}
		\right)\,.
\ee
Note that the map $L\mapsto M_L$ is an isomorphism between the abelian groups
\be 

\NS(X)=\{a_1F_1+a_2F_2+a_3\nabla\,\mid\, a_i\in\Z\} \cong \left\{\left(
         \begin{array}{cc}
            a_1 & -da_3   \\
            -da_3 & da_2
         \end{array}
		\right)\,\Big|\, a_i\in\Z\right\}\,.

\ee
\end{remark}

It turns out that this representation enables an ampleness criterion for line bundles on $X$:
\begin{corollary}\label{ample-crit}
Let $L\equiv a_1F_1+a_2F_2+a_3\nabla$ be a line bundle and $M_L$ the corresponding matrix.
Then $\det(M_L)=\frac{dL^2}{2}$ and the following are equivalent:
\begin{enumerate}
\item[\rm(i)] $L$ is ample.
\item[\rm(ii)] $M_L$ is positive definite.
\item[\rm(iii)] The intersection $L\cdot E$ is positive for every elliptic curve $E$ on $X$.
\end{enumerate}
\end{corollary}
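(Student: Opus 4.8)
The plan is to deal with the determinant identity by a one-line expansion and then prove the equivalences by splitting them into (i)$\Leftrightarrow$(ii), which will follow from the earlier ampleness criterion, and (ii)$\Leftrightarrow$(iii), which will follow from the explicit intersection formula. For the determinant I would simply expand
\[
   \det(M_L)=a_1\cdot da_2-(-da_3)^2=d(a_1a_2-da_3^2)=\frac{dL^2}{2},
\]
using $L^2=2(a_1a_2-da_3^2)$ from Prop.~\ref{ample-lb}. Since $d\ge 1$, this already records that $\det(M_L)>0$ is equivalent to $L^2>0$, which is the bridge between the matrix condition and ampleness. For (i)$\Leftrightarrow$(ii) I would invoke Sylvester's criterion for the symmetric $2\times 2$ matrix $M_L$: it is positive definite precisely when its two leading principal minors $a_1$ and $\det(M_L)$ are positive. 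By the determinant identity this reads $a_1>0$ and $\tfrac12 L^2>0$, which is word for word the ampleness criterion of Prop.~\ref{ample-lb}; so (i) and (ii) are equivalent with no further work.

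The direction (ii)$\Rightarrow$(iii) is the easy half of the second equivalence. If $M_L$ is positive definite, then the integral quadratic form $Q(a,b)=(a,b)\,M_L\,(a,b)^T$ is strictly positive on every nonzero integer vector. Since $\gcd(a^2,db^2,ab)$ is a positive integer for $(a,b)\neq(0,0)$, Prop.~\ref{intersection-elli-lb} gives $L\cdot N_{a,b}>0$; and because by Lemma~\ref{Haya-Nishi} every elliptic curve $E\subset X$ is a translate of some $N_{a,b}$, and so shares its numerical class, we conclude $L\cdot E>0$.

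The step I expect to be the main obstacle is the converse (iii)$\Rightarrow$(ii), because (iii) only asserts positivity of $Q$ at lattice points, whereas (ii) demands definiteness over all of $\R^2$. The point that closes this gap is the integrality of $M_L$, so I would argue by contraposition: assuming $M_L$ is not positive definite, I produce a nonzero integer vector on which $Q$ is $\le 0$, i.e. an elliptic curve violating (iii). Concretely, taking $(a,b)=(1,0)$ gives $N_{1,0}=F_2$ and $L\cdot F_2=Q(1,0)=a_1$, so (iii) already forces $a_1>0$; it remains to rule out $\det(M_L)\le 0$. If $\det(M_L)<0$ the form is indefinite, the open cone $\{Q<0\}$ is nonempty, and by density it contains a rational vector, which clears denominators to an integer vector with $Q<0$. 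If $\det(M_L)=0$ the kernel of the integer matrix $M_L$ is spanned by a nonzero integer vector, on which $Q$ vanishes. In either case some $N_{a,b}$ has $L\cdot N_{a,b}\le 0$, contradicting (iii); hence $a_1>0$ and $\det(M_L)>0$, and $M_L$ is positive definite. The only genuinely delicate input is thus the integrality of $M_L$, which guarantees that real negative or isotropic directions of $Q$ are actually realized at lattice points.
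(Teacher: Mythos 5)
Your proposal is correct and follows the same route as the paper: the paper's proof is a one-line appeal to Prop.~\ref{ample-lb} and Prop.~\ref{intersection-elli-lb}, which are exactly the two ingredients you use (Sylvester's criterion translating Prop.~\ref{ample-lb} into positive definiteness, and Prop.~\ref{intersection-elli-lb} linking the form $M_L$ to intersections with the curves $N_{a,b}$). Your write-up simply supplies the details the paper declares ``immediate,'' including the only step with real content --- the contrapositive lattice-point argument for (iii)$\Rightarrow$(ii), where integrality of $M_L$ guarantees that an indefinite or degenerate form is already non-positive at some integer vector.
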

\begin{proof}
This is an immediate consequence of Prop.~\ref{ample-lb} and Prop.~\ref{intersection-elli-lb}.
\end{proof}

%*****************************************************************************

\section{Integrality of Seshadri constants on $X$}

In this section we will provide a complete answer, in terms of $d$, to the question whether there exists a non-integer Seshadri constant on $X$ or not.
For this, we first develop a criterion to check if all Seshadri constants are computed by elliptic curves.
Obviously, if all Seshadri constants are computed by elliptic curves, then all Seshadri constants are integers by definition.
Our starting point is:

\begin{proposition}\label{Sesh-Num}
	The following are equivalent:
	\begin{enumerate}
	\item[\rm(i)] All Seshadri constants on $X$ are computed by elliptic curves.
	\item[\rm(ii)] Every ample line bundle $L$ on $X$ has a weakly submaximal elliptic curve, i.e., there exists an elliptic curve $E$ on $X$ such that $L\cdot N\le \sqrt{L^2}$.
	\item[\rm(iii)] For every positive definite matrix of the form
		\be
			M=\left(
         	\begin{array}{cc}
            	a_1 & -da_3   \\
            	-da_3 & da_2
         	\end{array}
			\right)
			\in\M_2(\Z)
		\ee
		there exists a coprime pair $(a,b)\in\Z^2\setminus\{0\}$ such that the inequality
		\be 
		\frac{1}{\gcd(a
		,d)}
		\left(
		\begin{array}{c}
            a \\
            b 
		\end{array}
      	\right)^T
      	\left(
         	\begin{array}{cc}
            	a_1 & -da_3   \\
            	-da_3 & da_2
         	\end{array}
		\right)
      	\left(
		\begin{array}{c}
            a    \\
            b 
		\end{array}
		\right)
		\le \sqrt{\frac{2\det(M)}{d}}
		\ee
		is satisfied.
	\end{enumerate}
\end{proposition}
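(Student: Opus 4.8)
The plan is to prove (i) $\Leftrightarrow$ (ii) using the general theory of Seshadri constants on abelian surfaces, and then to translate (ii) into the numerical statement (iii) through the dictionary built in the previous section. All three conditions are of the form ``for every ample $L$, \dots'', so it suffices to argue for a fixed ample $L$.

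For (i) $\Leftrightarrow$ (ii) the key is the identity $\eps(L) = \min\bigl\{\sqrt{L^2},\ \min_E (L\cdot E)\bigr\}$, the inner minimum being taken over all elliptic curves $E$ on $X$. The bound $\eps(L)\le\sqrt{L^2}$ holds on any abelian surface, and since $\mult_0 E = 1$ each quotient $L\cdot E$ occurs among the ratios defining $\eps(L)$, so $\eps(L)\le L\cdot E$; as the numbers $L\cdot E$ are positive integers for ample $L$, the minimum is attained. For the reverse inequality I will invoke the fact, from the theory of Seshadri constants on abelian surfaces (cf. \cite{Bauer-Szemberg:sesh-abelian-surface}, \cite{BGS:integer}), that whenever $\eps(L)<\sqrt{L^2}$ the Seshadri constant is computed by an elliptic curve. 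Granting the displayed identity, $\eps(L)$ is computed by an elliptic curve exactly when $\eps(L) = \min_E (L\cdot E)$, which in turn happens exactly when $\min_E (L\cdot E)\le\sqrt{L^2}$; this last condition is precisely the existence of a weakly submaximal elliptic curve, giving (i) $\Leftrightarrow$ (ii).

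For (ii) $\Leftrightarrow$ (iii) I pass to matrices via Remark~\ref{NS-Matrix}. By Cor.~\ref{ample-crit} the ample line bundles correspond bijectively to the positive definite matrices $M$ of the prescribed shape, and $\sqrt{L^2} = \sqrt{2\det(M_L)/d}$ because $\det(M_L) = dL^2/2$. By Lemma~\ref{Haya-Nishi} every elliptic curve on $X$ is a translate of some $N_{a,b}$, and since $N_{a,b} = N_{\lambda a,\lambda b}$ we may always reduce to a coprime pair $(a,b)$; as translation changes neither the numerical class nor the intersection number, quantifying over weakly submaximal elliptic curves is the same as quantifying over coprime pairs $(a,b)\in\Z^2\setminus\{0\}$. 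Proposition~\ref{intersection-elli-lb} then rewrites $L\cdot E\le\sqrt{L^2}$ as the quadratic-form inequality in (iii), with the sole discrepancy that the normalizing denominator there is $\gcd(a,d)$ rather than $\gcd(a^2,db^2,ab)$.

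Reconciling these two denominators is the one point requiring care, and the main obstacle. I will establish the elementary identity $\gcd(a^2,db^2,ab) = \gcd(a,d)$ for coprime $(a,b)$: using $\gcd(a^2,ab) = |a|\gcd(a,b) = |a|$ the triple gcd collapses to $\gcd(|a|,db^2)$, and since $\gcd(a,b)=1$ forces $\gcd(a,b^2)=1$, comparing $p$-adic valuations prime by prime gives $\gcd(a,db^2) = \gcd(a,d)$. Substituting this identity into the formula of Prop.~\ref{intersection-elli-lb} makes the two inequalities coincide, which completes the translation and hence the proof.
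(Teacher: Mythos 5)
Your treatment of (ii) $\iff$ (iii) is sound and is essentially the paper's argument: Cor.~\ref{ample-crit} and Rmk.~\ref{NS-Matrix} identify ample bundles with the positive definite matrices of the prescribed shape, Lemma~\ref{Haya-Nishi} and Prop.~\ref{intersection-elli-lb} reduce the quantification over elliptic curves to coprime pairs, and your prime-by-prime verification of $\gcd(a^2,db^2,ab)=\gcd(a,d)$ for coprime $(a,b)$ supplies a proof of the identity that the paper merely asserts. The gap is in (ii) $\Rightarrow$ (i). The ``fact'' you invoke --- that on an abelian surface $\eps(L)<\sqrt{L^2}$ forces $\eps(L)$ to be computed by an elliptic curve --- is false, and neither \cite{Bauer-Szemberg:sesh-abelian-surface} nor \cite{BGS:integer} asserts it. On a \emph{simple} abelian surface there are no elliptic curves at all, yet a principal polarization has $\eps(L)=\frac43<\sqrt{2}=\sqrt{L^2}$. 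Worse, the claim fails on the very surfaces of this paper: for $d\ge 3$, Thm.~\ref{Sesh-no-submaximal} produces an ample $L$ with $L\cdot E>\sqrt{L^2}$ for \emph{every} elliptic curve $E$, while $L^2=16d^2-2d$ is not a perfect square (proof of Thm.~\ref{Sesh-non-integer}); since $\eps(L)$ is rational, $\eps(L)\ne\sqrt{L^2}$, hence $\eps(L)<\sqrt{L^2}$ and $\eps(L)$ is computed by no elliptic curve. In particular your proposed identity $\eps(L)=\min\bigl\{\sqrt{L^2},\ \min_E (L\cdot E)\bigr\}$ is precisely the statement that breaks down when $d\ge3$; assuming it begs the question the whole paper is investigating.

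What the implication (ii) $\Rightarrow$ (i) actually requires is the theorem of Bauer and Schulz cited in the paper's proof (\cite[Thm.~4.5]{Bauer-Schulz:sesh}): for a \emph{fixed} ample $L$, \emph{if} $L$ admits a weakly submaximal elliptic curve, \emph{then} $\eps(L)$ is computed by an elliptic curve. There the hypothesis of a weakly submaximal elliptic curve is used in an essential way (it is played off against any hypothetical non-elliptic submaximal curve); it cannot be dropped in favour of the unconditional principle you assume. So your (i) $\Rightarrow$ (ii) direction and the numerical translation stand, but the hard direction of the proposition is assumed rather than proved, via a premise that is not only unproved but false.
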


\begin{proof}
The implication (i) $\Rightarrow$ (ii) follows from the definition of Seshadri constants and the upper bound $\varepsilon(L)\le\sqrt{L^2}$.
The implication (ii) $\Rightarrow$ (i) is a result from Schulz.
The argument can be found in the proof of \cite[Thm.~4.5]{Bauer-Schulz:sesh}.

The equivalence of (ii) $\iff$ (iii) is a consequence of Prop.~\ref{intersection-elli-lb} and the fact that it is enough to consider coprime pairs $(a,b)$, since we have $L\cdot N_{a,b}=L\cdot N_{\lambda a,\lambda b}$ for any multiple $\lambda\in\N$.
Moreover, if $a$ and $b$ are coprime, then $\gcd(a^2,b^2d,ab)=\gcd(a,d)$. 
\end{proof}

In Prop.~\ref{Sesh-Num} we have translated the question whether all Seshadri constants are computed by elliptic curves into a purely numerical problem.
To progress further, we will use results for binary quadratic forms and apply them to our setting. 
We start by briefly collecting the relevant language and facts.
Recall that a \textit{(binary) quadratic form} $Q:\Z\times\Z\to \Z$ is given by
	\be
		(x,y)\mapsto  Ax^2+Bxy+Cy^2 =
			\left(
			\begin{array}{c}
				x \\
				y 
			\end{array}
    	  	\right)^T
    	  	\left(
			\begin{array}{cc}
				A & B/2   \\
				B/2 & C
			\end{array}
			\right)
     	 	\left(
			\begin{array}{c}
     	       x    \\
      	      y 
			\end{array}
			\right)\,,
	\ee
where $A, B$ and $C$ are integers.
We will denote $Q$ by the triple $(A,B,C)$.
A quadratic form $Q$ is called \textit{primitive} if $\gcd(A,B,C)=1$.
Two quadratic forms $Q$ and $P$ are called \textit{(properly) equivalent} if there exists a matrix $S\in\GL_2(\Z)$ such that $P(x,y)= Q(S(x,y))$ (and, respectively $\det(S)=1$).
It is more common to use proper equivalence with quadratic forms, since the classes of primitive quadratic forms of a fixed determinant form a group.

The crucial ingredient are so-called reduced forms and their properties:

\begin{definition}
A positive definite quadratic form $(A,B,C)$ is called \textit{reduced}, if the integers $A$, $B$ and $C$ satisfy $|B|\le A\le C$ and, if one of the inequalities is not strict, then $B\ge 0$.
\end{definition}

\begin{theorem}[{\cite[Thm.~2.8]{Cox:binary}}]\label{reduced-forms} Every positive definite quadratic form is properly equivalent to a unique reduced form.
Moreover, there exists an effective algorithm to transform a given form into its unique reduced form.
\end{theorem}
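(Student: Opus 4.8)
The plan is to prove this as the classical reduction theorem for positive definite binary quadratic forms, establishing existence (together with the algorithm) first and treating uniqueness afterwards. I would work with the two proper equivalences coming from the standard generators of $\SL_2(\Z)$: the shears $S_m=\left(\begin{array}{cc}1&m\\0&1\end{array}\right)$ and the rotation $R=\left(\begin{array}{cc}0&-1\\1&0\end{array}\right)$. A direct substitution shows that $S_m$ sends the form $(A,B,C)$ to $(A,\,B+2Am,\,Am^2+Bm+C)$ and that $R$ sends it to $(C,-B,A)$. Starting from an arbitrary positive definite $(A,B,C)$, the reduction algorithm is: first choose $m$ so that the middle coefficient is shifted into the range $-A<B\le A$, giving $|B|\le A$; then, if $A>C$, apply $R$ to interchange the outer coefficients and repeat. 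Since $A=Q(1,0)$ is a positive integer that strictly decreases each time $R$ is applied in the step $A>C$, the procedure terminates, and upon termination $|B|\le A\le C$. The two boundary normalizations are then handled explicitly: if $B=-A$ one applies $S_1$ to reach $(A,A,C)$, and if $A=C$ with $B<0$ one applies $R$ to reach $(A,-B,A)$. This simultaneously proves existence and exhibits the effective algorithm.

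The technical core is a lower bound for the values of a reduced form, which I would prove next. For a reduced form $(A,B,C)$ I claim that $Q(x,0)=Ax^2\ge A$, that $Q(0,y)=Cy^2\ge C$, and that $Q(x,y)\ge A-|B|+C$ whenever $xy\ne 0$. The last inequality I would obtain by writing $u=|x|\ge1$, $v=|y|\ge1$ and noting $Q(x,y)\ge Au^2-|B|uv+Cv^2$; the right-hand side is a convex function of $(u,v)$ whose gradient at $(1,1)$ has both components $2A-|B|$ and $2C-|B|$ strictly positive (because $0<A\le C$ and $|B|\le A$), so it is minimized on $[1,\infty)^2$ at the corner $(1,1)$, where it equals $A-|B|+C$. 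Since $A-|B|+C\ge C\ge A$, it follows that the minimum of $Q$ on $\Z^2\setminus\{0\}$ equals $A$, and that in the interior case $|B|<A<C$ this minimum is attained only at $\pm(1,0)$.

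With this in hand, uniqueness follows. Suppose $(A,B,C)$ and $(A',B',C')$ are reduced and properly equivalent via $S\in\SL_2(\Z)$, so that $Q'(v)=Q(Sv)$; since properly equivalent forms take the same values, comparing minima gives $A=A'$. In the interior case the minimum is attained only at $\pm(1,0)$ for both forms, so $S$ must carry $\pm(1,0)$ to $\pm(1,0)$; hence the first column of $S$ is $\pm(1,0)$, and $\det S=1$ forces $S=\pm S_m$ for some $m$. Replacing $S$ by $-S$ if necessary, $S=S_m$, and then $B'=B+2Am$; the constraints $|B|<A$ and $|B'|\le A$ force $m=0$, whence $B'=B$ and $C'=C$. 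The boundary configurations $|B|=A$ or $A=C$, where the minimum is attained by additional vectors, are precisely the cases in which the normalization $B\ge 0$ in the definition of reduced removes the remaining ambiguity; these amount to finitely many explicit configurations that I would check by hand.

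I expect the uniqueness step to be the main obstacle, and within it the careful treatment of the equality cases of the key inequality together with the boundary normalizations. This is exactly where the convention $B\ge 0$ on the boundary is forced: without it one only obtains $B'=\pm B$ rather than $B'=B$, so the bookkeeping that distinguishes a reduced form from its mirror image $(A,-B,C)$ is the delicate part of the argument.
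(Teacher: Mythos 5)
This theorem is not proved in the paper at all: it is quoted directly from \cite[Thm.~2.8]{Cox:binary}, so there is no internal argument to compare yours against; the relevant comparison is with the classical Lagrange--Gauss reduction proof in the cited source, and your proposal is essentially that proof. Your existence step (reduction by the shears $S_m$ and the inversion $R$, with termination because the leading coefficient strictly decreases each time $R$ is invoked), your value bound $Q(x,y)\ge A-|B|+C$ for $xy\ne 0$ (the convexity justification is valid: the Hessian of $Au^2-|B|uv+Cv^2$ is positive definite, and the gradient at the corner $(1,1)$ is componentwise nonnegative), and your rigidity computation $|B+2Am|\ge 2A|m|-|B|>A$ for $m\ne 0$ are all correct.

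The boundary analysis you defer does resolve exactly as you predict, and can be organized compactly, so it is not a genuine gap. (a) If $Q$ represents its minimum only at $\pm(1,0)$ --- which happens both in the interior case and when $B=A<C$ --- then the first column of $S$ is $\pm(1,0)$, so $S$ acts as some $S_m$, and the constraint $|B+2Am|\le A$ forces $m=0$; the case $B=A$ also allows $m=-1$, but that produces middle coefficient $-A$, which the convention $B'\ge 0$ excludes. (b) Otherwise both forms represent their minimum at least four times (the number of representing vectors is a proper-equivalence invariant), which for reduced forms forces $A=C$ and $A'=C'$, since a reduced form with $A<C$ represents $A$ only at $\pm(1,0)$; then equal minima give $A=A'$, hence $C=C'=A$, and equal discriminants give $B'^2=B^2$, whence $B'=B$ by the convention $B,B'\ge 0$. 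Two further remarks: your argument nowhere uses primitivity, so it proves the statement in the generality in which the paper states it (Cox formulates it for primitive forms, but reduction is insensitive to this); and your closing observation that without the boundary convention one only obtains $B'=\pm B$ is precisely the right diagnosis of where that convention is needed.
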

It follows that any positive definite quadratic form is \textit{equivalent} to a unique form $(A,B,C)$ with $0\le B\le A \le C$.
This will be used in the last section.

Now, we turn our attention to the two relevant properties which we obtain by using reduced forms:

\begin{proposition}[{\cite[(2.9), (2.12)]{Cox:binary}}]\label{reduced-properties} Let $Q=(A,B,C)$ be a reduced positive definite quadratic form.
Then:
	\begin{enumerate}
		\item[\rm(i)] We have $A\le \sqrt{\frac{4\det(Q)}{3}}$.
		\item[\rm(ii)] The two smallest integers represented by $Q$ with coprime pairs are $Q(1,0)=A$ and $Q(0,1)=C$.
	\end{enumerate}
\end{proposition}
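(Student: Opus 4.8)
The plan is to derive both statements directly from the defining inequalities $|B|\le A\le C$ of a reduced form, reading the determinant off the associated symmetric matrix as $\det(Q)=AC-B^2/4$, so that $4\det(Q)=4AC-B^2$.

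For part~(i) the estimate is immediate. Since $C\ge A$ we have $AC\ge A^2$, and since $|B|\le A$ we have $B^2\le A^2$, whence
$$4\det(Q)=4AC-B^2\ge 4A^2-A^2=3A^2\,.$$
Taking square roots of the nonnegative quantities in $3A^2\le 4\det(Q)$ (both sides being positive, as $A>0$ and $Q$ is positive definite) yields $A\le\sqrt{4\det(Q)/3}$.

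For part~(ii) I would first treat the coprime pairs with a vanishing coordinate. A coprime pair of the form $(x,0)$ forces $x=\pm 1$, so it represents $Q(1,0)=A$; likewise $(0,y)$ forces $y=\pm1$ and represents $Q(0,1)=C$. It then remains to show that every coprime pair $(x,y)$ with $x\ne 0$ and $y\ne 0$, hence $|x|,|y|\ge 1$, satisfies $Q(x,y)\ge C$; together with $A\le C$ this identifies $A$ and $C$ as the two smallest represented values.

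The substantive step, and the one I expect to be the main obstacle, is this lower bound in the mixed case. Starting from $Bxy\ge-|B|\,|xy|$ I would bound
$$Q(x,y)\ge Ax^2-|B|\,|xy|+Cy^2=(A-|B|)x^2+|B|\bigl(x^2-|xy|+y^2\bigr)+(C-|B|)y^2\,.$$
The reduced condition makes the coefficients $A-|B|$ and $C-|B|$ nonnegative, while $x^2-|xy|+y^2=(|x|-|y|)^2+|xy|\ge|xy|\ge 1$ and $x^2,y^2\ge 1$; hence each of the three grouped terms is at least its coefficient, giving $Q(x,y)\ge A+C-|B|\ge C$. Finding the regrouping that exposes these three nonnegative contributions is the only real trick; once it is in place the claim follows, as is standard in the reduction theory of binary quadratic forms \cite{Cox:binary}.
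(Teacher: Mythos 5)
Your proof is correct. Note that the paper itself gives no argument for this proposition at all --- it is quoted from Cox \cite[(2.9), (2.12)]{Cox:binary} --- so what you have supplied is a self-contained replacement for the citation. Part~(i) matches the standard argument verbatim: $4\det(Q)=4AC-B^2\ge 4A^2-A^2=3A^2$ using $C\ge A$ and $|B|\le A$. For part~(ii), your regrouping
$$Ax^2-|B|\,|xy|+Cy^2=(A-|B|)x^2+|B|\bigl(x^2-|xy|+y^2\bigr)+(C-|B|)y^2$$
is a clean variant of the inequality used in Cox's proof, where one instead splits into the cases $|x|\ge|y|$ and $|y|\ge|x|$ to get $Q(x,y)\ge(A-|B|+C)\min(x^2,y^2)$; both routes land on the same bound $Q(x,y)\ge A+C-|B|\ge C$ for coprime pairs with $xy\ne 0$, and your identity $x^2-|xy|+y^2=(|x|-|y|)^2+|xy|\ge 1$ avoids the case split. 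One cosmetic remark: when $A=C$ (or $A=|B|$, where mixed pairs can also attain the value $C$), the phrase ``the two smallest integers represented'' is degenerate, but that is an imprecision of the statement as quoted, not of your argument, which proves exactly what is needed: every coprime representation is $\ge A$, and every coprime representation other than $\pm(1,0)$ is $\ge C$.
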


With this, we can refine Prop.~\ref{Sesh-Num} in terms of reduced positive definite forms.

\begin{proposition}\label{Sesh-Num-refined}
	The following is equivalent:
	\begin{enumerate}
	\item[\rm(i)] All Seshadri constants on $X$ are computed by elliptic curves.
	\item[\rm(ii)] For every reduced positive definite quadratic form $Q=(A,B,C)$ and every 
	$S=\left(
		\begin{array}{cc}
			\alpha & \beta   \\
			\gamma & \delta
		\end{array}
	\right)\in\SL_2(\Z)$ there exists a coprime pair $(a,b)\in\Z^2\setminus\{0\}$ such that
	\be
		\frac{Q(a,b)}{\gcd(a\alpha+b\beta,d)}\le \sqrt{\frac{2\det(Q)}{d}} \,.
	\ee
	\end{enumerate}
\end{proposition}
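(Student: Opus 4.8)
The plan is to read part~(ii) of Prop.~\ref{Sesh-Num-refined} as condition~(iii) of Prop.~\ref{Sesh-Num} after passing to a reduced form, so that the two statements become the same inequality written in two coordinate systems. Since Prop.~\ref{Sesh-Num} already supplies (i) $\iff$ (iii), it suffices to prove that condition~(iii) there is equivalent to condition~(ii) of Prop.~\ref{Sesh-Num-refined}. The mechanism is Thm.~\ref{reduced-forms}: given a positive definite $M$ as in (iii), viewed as a binary quadratic form, there is a unique reduced form $Q=(A,B,C)$ and a substitution $S_0\in\SL_2(\Z)$ with $M(v)=Q(S_0v)$ for every $v\in\Z^2$. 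The free matrix $S$ in (ii) will be exactly the inverse substitution $S:=S_0^{-1}$, and I expect the inequality of (iii) for $M$ to turn into the inequality of (ii) for the pair $(Q,S)$.

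First I would verify this transfer term by term. As $S_0$ is invertible over $\Z$, the map $v\mapsto S_0v$ is a coprimality-preserving bijection of $\Z^2\setminus\{0\}$, so a coprime witness for $M$ is the same datum as a coprime witness for $Q$; and since $\det S_0=1$ the determinant is preserved, $\det(M)=\det(Q)$, so the two right-hand sides $\sqrt{2\det(M)/d}$ and $\sqrt{2\det(Q)/d}$ agree. The only quantity that is \emph{not} a quadratic invariant, and hence the computation needing real care, is the gcd factor: writing the witness for $Q$ as $(a,b)=S_0v$, the first coordinate of $v$ — which is what enters $\gcd(\,\cdot\,,d)$ in (iii) — is recovered as the first-row linear form $\alpha a+\beta b$ of $S=S_0^{-1}$. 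Substituting accordingly sends the inequality of (iii) for $M$ to
\[
	\frac{Q(a,b)}{\gcd(a\alpha+b\beta,d)}\le\sqrt{\frac{2\det(Q)}{d}}\,,
\]
which is precisely (ii) for $(Q,S)$. Read in the direction (ii) $\Rightarrow$ (iii), this settles that implication at once.

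The converse (iii) $\Rightarrow$ (ii) is where I expect the real obstacle. Running the dictionary backwards, a pair $(Q,S)$ produces the form $M(v):=Q(S^{-1}v)$, but $M$ need not be one of the matrices admitted in (iii): those have off-diagonal and lower-right entries divisible by $d$ (so in particular $\det(M)=dL^2/2$ is divisible by $d$), whereas an arbitrary reduced $Q$ satisfies no such constraint. Thus I cannot simply quote (iii) for every $(Q,S)$, and the point to establish is that the pairs falling outside the admissible family impose no extra condition, i.e.\ their instance of (ii) is always satisfiable. Here I would lean on Prop.~\ref{reduced-properties}: the smallest values of $Q$ on coprime pairs are attained on a short list of vectors (starting with $(1,0)$ and $(0,1)$, which give $A$ and $C$), and among these one can select a vector on which $a\alpha+b\beta$ lies in a residue class modulo $d$ sharing a large common factor with $d$, so that the enlarged denominator $\gcd(a\alpha+b\beta,d)$ compensates for $Q(a,b)$ being as large as $A\le\sqrt{4\det(Q)/3}$ and still brings the quotient below $\sqrt{2\det(Q)/d}$. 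Carrying out this selection — balancing the size of $Q(a,b)$ against the attainable gcd for the excluded pairs — is the crux; once it is done, the admissible instances governed by (iii) and the full family of instances in (ii) coincide in truth value, completing the equivalence.
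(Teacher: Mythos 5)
Your transfer dictionary and the easy direction are exactly the paper's argument: under a unimodular substitution the form goes to an equivalent form, determinant and coprimality are preserved, and the factor $\gcd(x,d)$ becomes $\gcd(a\alpha+b\beta,d)$ with $(\alpha,\beta)$ the first row of $S=S_0^{-1}$; reducing a matrix of the admissible shape of Prop.~\ref{Sesh-Num}(iii) via Thm.~\ref{reduced-forms} then lets you invoke (ii) and transfer the witness back. Up to this point your proposal is correct and matches the paper.

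The gap is in your plan for the converse, condition (iii) of Prop.~\ref{Sesh-Num} $\Rightarrow$ (ii). You reduce it to the claim that every pair $(Q,S)$ whose transported form $M=Q\circ S^{-1}$ is \emph{not} of the admissible shape (integral, with off-diagonal and lower-right entries divisible by $d$) has an \emph{unconditionally} satisfiable instance of (ii), to be proven by selecting among the few small vectors of $Q$ (Prop.~\ref{reduced-properties}) one on which the denominator $\gcd(a\alpha+b\beta,d)$ is large. This claim is false for every $d\ge 3$, which is precisely the interesting range: Thm.~\ref{Sesh-no-submaximal} exhibits the reduced form $Q=(2,1,d)$ and the matrix $S=\left(\begin{array}{cc}1&\lceil d/2\rceil\\0&1\end{array}\right)$ for which \emph{no} coprime pair satisfies the inequality; and this pair lies outside your admissible family, since the middle coefficient of $Q$ is odd, so the matrix of $Q\circ S^{-1}$ is not even integral, let alone of the admissible shape. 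So the non-admissible instances genuinely carry content --- they can fail, in tandem with (i) and (iii) --- and no argument that never uses hypothesis (iii) can dispose of them. The idea you are missing is the paper's scaling trick: the inequality in (ii) is invariant under $Q\mapsto\lambda Q$, $\lambda>0$, because the numerator scales by $\lambda$, the gcd is unchanged, and $\sqrt{2\det(\lambda Q)/d}=\lambda\sqrt{2\det(Q)/d}$. Taking $\lambda=2d$, every entry of the matrix of $2dQ$ is divisible by $d$, and this divisibility survives conjugation by any integral matrix, so $S^{T}(2dQ)S$ is a positive definite matrix of the admissible shape. Hence \emph{every} instance of (ii), admissible or not, is equivalent to an instance of Prop.~\ref{Sesh-Num}(iii), and the implication follows with no leftover cases.
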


\begin{proof}
To begin with, we will determine how a base change affects the inequality given in Prop.~\ref{Sesh-Num}.
Assume that we have a positive definite matrix $M$ and for $S=\left(
		\begin{array}{cc}
			\alpha & \beta   \\
			\gamma & \delta
		\end{array}
	\right)\in\SL_2(\Z)$ we set $R:=S^T M S$.
Then, by changing coordinates a coprime pair $(x,y)$ corresponds to the coprime pair $(v,w):=S^{-1}(x,y)^T$ and it follows that
\be 
		\frac{1}{\gcd(x
		,d)}
		\left(
		\begin{array}{c}
            x \\
            y 
		\end{array}
      	\right)^T
      	M\,\,
      	\left(
		\begin{array}{c}
            x    \\
            y 
		\end{array}
		\right)
		=\frac{1}{\gcd(v\alpha+w\beta,d)} 
		\left(
		\begin{array}{c}
            v \\
            w 
		\end{array}
      	\right)^T
      	R\,\,
      	\left(
		\begin{array}{c}
            v    \\
            w 
		\end{array}
		\right)\,.
\ee
Thus, the change of basis leads to the inequality stated in the Proposition.

The implication (ii) $\Rightarrow$ (i) follows from the fact that every positive definite matrix appearing in Prop.~\ref{Sesh-Num} is by Thm.~\ref{reduced-forms} equivalent to a unique reduced form.
Thus, if the inequality holds for all reduced quadratic forms and all $S\in\SL_2(\Z)$, then it already holds for every positive definite form.

The implication (i) $\Rightarrow$ (ii) is less apparent, since Prop.~\ref{Sesh-Num} gives a statement on positive definite quadratic forms given by
$$
\left(
		\begin{array}{cc}
			a_1 & -da_3  \\
			-da_3 & da_2
		\end{array}
	\right)\,, \eqno (\ast)
$$
whereas $(ii)$ is a statement about \textit{all} (reduced) positive definite quadratic forms.
Let $Q=(A,B,C)$ be any reduced positive definite quadratic form and $S\in\SL_2(\Z)$.
First, we observe that the inequality in (ii) is invariant by scaling.
Thus, $Q$ and the coprime pair $(a,b)$ satisfy the inequality if and only if it holds for the coprime pair $(a,b)$ and all forms $\lambda Q=(\lambda A,\lambda B,\lambda C)$ for $\lambda>0$, since we have
\be
		\frac{\lambda Q(a,b)}{\gcd(a\alpha+b\beta,d)}
		 \le \sqrt{\frac{2\lambda^2\det(Q)}{d}}\,. 
\ee
Now, by choosing $\lambda=2d$ and applying the base change $S$ we see that the corresponding matrix
\be 
	M=2d\cdot S^T\left(
		\begin{array}{cc}
			A & B/2  \\
			B/2 & C
		\end{array}
	\right)S
\ee
is of the form $(\ast)$.
Therefore, the inequality holds in fact for all reduced forms.
\end{proof}

As a consequence of Thm.~\ref{reduced-forms} and Prop.~\ref{Sesh-Num-refined} the minimal intersection number with elliptic curves can be effectively computed:

\begin{proposition}\label{algo}
Let $L$ be a line bundle on $X$.
Then there exists an effective algorithm to calculate $\varepsilon^*(L):=\min\{L.N\mid N\subset X \mbox{ elliptic curve}\}$.
\end{proposition}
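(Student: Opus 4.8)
The plan is to reduce the computation of $\eps^*(L)=\min\{L\cdot N\mid N\subset X\text{ elliptic}\}$ to finitely many evaluations of the quadratic form $M_L$ from Prop.~\ref{intersection-elli-lb}. By Lemma~\ref{numclass-elli}, every elliptic curve on $X$ is numerically of the form $N_{a,b}$ for a coprime pair $(a,b)\in\Z^2\setminus\{0\}$ (non-coprime pairs give the same curve and hence the same intersection number), so
\be
	\eps^*(L)=\min_{\substack{(a,b)\ \text{coprime}}}\ \frac{Q(a,b)}{\gcd(a,d)}\,,
\ee
where $Q(a,b)=\left(\begin{smallmatrix}a\\b\end{smallmatrix}\right)^T M_L\left(\begin{smallmatrix}a\\b\end{smallmatrix}\right)$ and, for coprime $(a,b)$, $\gcd(a^2,db^2,ab)=\gcd(a,d)$ as noted after Prop.~\ref{Sesh-Num}. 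First I would dispose of the case in which $L$ is not ample: if $L$ is ample then by Cor.~\ref{ample-crit} the matrix $M_L$ is positive definite and $Q$ is a genuine positive definite form, which is the case where effectivity is interesting; if $L$ is not ample, one either returns that $\eps^*(L)$ is undefined or handles it separately, so the substantive algorithm concerns the positive definite case.

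The key observation is that the denominator $\gcd(a,d)$ depends only on the residue of $a$ modulo $d$, and in fact only takes finitely many values, namely the divisors of $d$. So I would stratify the minimization by the value $g:=\gcd(a,d)$, which ranges over the (finitely many) divisors of $d$. For each such $g$ the problem becomes: minimize $Q(a,b)$ over coprime pairs $(a,b)$ subject to the congruence constraint $\gcd(a,d)=g$, then divide the result by $g$ and take the minimum over all $g\mid d$. The point is that for a fixed positive definite form $Q$, the quantity $Q(a,b)$ grows without bound as $\|(a,b)\|\to\infty$, so for each threshold it suffices to search a bounded region. Concretely, once we have any candidate value $v_0$ (for instance $Q(1,0)/\gcd(1,d)=Q(1,0)=A$ from a reduced representative, using Prop.~\ref{reduced-properties}(ii)), any competing pair must satisfy $Q(a,b)\le d\cdot v_0$, since $\gcd(a,d)\le d$; because $Q$ is positive definite this confines $(a,b)$ to a compact ellipse and hence to finitely many integer lattice points.

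The concrete algorithm I would state is therefore: (1) use the effective reduction of Thm.~\ref{reduced-forms} to find a reduced form properly equivalent to a scalar multiple of $M_L$, obtaining a first upper bound $B_0$ for $\eps^*(L)$ together with the explicit $\SL_2(\Z)$ transformation relating the two; (2) using positive definiteness, compute from the coefficients of $Q$ an explicit radius $\rho$ such that $Q(a,b)\le d\cdot B_0$ forces $|a|,|b|\le\rho$; (3) enumerate all coprime lattice points in that box, evaluate $Q(a,b)/\gcd(a,d)$ at each, and return the minimum. Each step terminates: step (1) is effective by Thm.~\ref{reduced-forms}, and steps (2)--(3) involve only finitely many integer pairs. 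This proves that $\eps^*(L)$ is computable.

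The main obstacle — really the only subtle point — is producing an honest, explicit bound in step (2) and arguing that no elliptic curve outside the box can beat the current minimum. The naive bound $\eps^*(L)\le Q(a,b)$ ignores the helpful denominator $\gcd(a,d)$, and conversely that denominator is what forces us to search a box of size $\sim d\cdot B_0$ rather than $\sim B_0$; one must confirm the search region is still finite (it is, since $d$ is a fixed finite integer). I would make this rigorous by writing $Q(a,b)\ge \mu(a^2+b^2)$ where $\mu>0$ is the smallest eigenvalue of $M_L$ (positive by positive definiteness), so that $Q(a,b)\le d\,B_0$ implies $a^2+b^2\le d\,B_0/\mu$, giving the explicit radius $\rho=\sqrt{d\,B_0/\mu}$. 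The remaining verification — that the minimum over this finite set really equals the global minimum over all coprime pairs — is then immediate, since any pair outside the box has $Q(a,b)/\gcd(a,d)\ge Q(a,b)/d>B_0\ge\eps^*(L)$.
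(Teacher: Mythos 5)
Your proposal is correct and follows essentially the same route as the paper's own proof: both use the effective reduction algorithm of Thm.~\ref{reduced-forms} to produce an initial candidate value (the paper takes the bound $M=dA$ on $Q(a,b)$, you take $d\,B_0$ with $B_0=A$) and then exploit positive definiteness, together with the fact that the denominator $\gcd(a,d)$ is at most $d$, to confine the search to finitely many coprime pairs. Your explicit radius $\rho=\sqrt{d\,B_0/\mu}$ via the smallest eigenvalue merely makes concrete the finiteness claim that the paper states without elaboration.
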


\begin{proof}
Let $M_L$ be the corresponding matrix to the line bundle $L$.
By Thm.~\ref{reduced-forms} there exists an algorithm which computes the reduced form $Q=(A,B,C)$ and the base change $S$ such that $M_L=S^TQS$.
The claim follows from the fact that there exists only a finite number of pairs $(a,b)$ such that $Q(a,b)\le M$ for any upper bound $M$ and by choosing the upper bound $M=dA$.
\end{proof}

We now turn to the integrality  question.
If $d=1$, then by \cite[Thm.~2.2]{Bauer-Schulz:sesh} every line bundle has a weakly submaximal elliptic curve and, thus, all Seshadri constants are integers.
We give now a complete answer depending on the degree $d$ whether all Seshadri constants are integers or if there exists rational Seshadri constants.

\begin{theorem}\label{Sesh-integer-d12}
If $d=1$ or $d=2$, then all Seshadri constants are integers.
Moreover, all Seshadri constants on $X$ are computed by elliptic curves.
\end{theorem}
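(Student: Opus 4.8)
The plan is to verify condition~(ii) of Proposition~\ref{Sesh-Num-refined}, since by that proposition this is equivalent to all Seshadri constants being computed by elliptic curves, which in turn forces them to be integers. The case $d=1$ is already settled by \cite[Thm.~2.2]{Bauer-Schulz:sesh} as noted above, so the entire content lies in the case $d=2$. Here the target inequality reads
\[
\frac{Q(a,b)}{\gcd(a\alpha+b\beta,2)}\le\sqrt{\det Q},
\]
where $Q=(A,B,C)$ is an arbitrary reduced positive definite form and $(\alpha,\beta)$ is the first row of $S\in\SL_2(\Z)$; I must exhibit a coprime pair $(a,b)$ satisfying it. The decisive feature of $d=2$ is the denominator: the gcd can only be $1$ or $2$, and it equals $2$ exactly when the linear form $a\alpha+b\beta$ is even.

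First I would single out the three coprime pairs $(1,0)$, $(0,1)$ and $(1,\mp1)$ (the sign chosen so that $Q(1,\mp1)=A-|B|+C$), which represent the values $A$, $C$ and $A-|B|+C$ of $Q$; the first two are the two smallest values attained on coprime pairs by Proposition~\ref{reduced-properties}(ii). Their linear-form values reduce mod $2$ to $\alpha$, $\beta$ and $\alpha+\beta$. Since $\det S=1$ forces $\gcd(\alpha,\beta)=1$, these three residues cannot all be even, and a short parity check shows that in each admissible case \emph{exactly one} of $\alpha,\beta,\alpha+\beta$ is even. Thus among the three pairs there is always precisely one at which the gcd equals $2$.

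The proof then splits according to the size of $A$. If $A\le\sqrt{\det Q}$, I would simply take $(a,b)=(1,0)$, so that $Q(1,0)/\gcd(\alpha,2)\le A\le\sqrt{\det Q}$ with no help from the gcd needed. If instead $A>\sqrt{\det Q}$, then $A^2>AC-B^2/4$ gives $B^2>4A(C-A)$, which together with $|B|\le A$ pins the form into the near-hexagonal regime $C<\tfrac54A$. In this regime I would check that all three of $A$, $C$ and $A-|B|+C$ are at most $2\sqrt{\det Q}$: the bound on $A$ is immediate from Proposition~\ref{reduced-properties}(i), which yields $A\le\tfrac{2}{\sqrt3}\sqrt{\det Q}$, while the bounds on $C$ and on $A-|B|+C$ reduce, after writing $C=A+t$ and clearing the square root, to elementary quadratic inequalities in $t$ that hold precisely because we are in the regime $t<B^2/(4A)$. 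Choosing the unique pair (among the three) with even linear-form value then supplies a gcd of $2$, so the left-hand side is at most $\tfrac12\cdot2\sqrt{\det Q}=\sqrt{\det Q}$.

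The hard part will be this second case. When $A>\sqrt{\det Q}$ no coprime pair can meet the bound with gcd $1$ (since $A$ is the minimum of $Q$), so the argument genuinely depends on arranging the gcd to equal $2$ at a pair whose $Q$-value has already been bounded by $2\sqrt{\det Q}$; the parity bookkeeping together with the two delicate inequalities $C\le2\sqrt{\det Q}$ and $A-|B|+C\le2\sqrt{\det Q}$ in the near-hexagonal regime are the crux. By contrast, I expect the analogous attempt for $d\ge3$ to break down exactly here, which is consistent with Theorem~\ref{introthm-1}.
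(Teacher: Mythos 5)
Your proof is correct and follows essentially the same route as the paper: both verify Proposition~\ref{Sesh-Num-refined}(ii) using the same three test pairs $(1,0)$, $(0,1)$, $(1,\pm1)$, the parity of $\alpha$, $\beta$, $\alpha+\beta$ to gain the factor $\gcd=2$, and the reduced-form inequalities $|B|\le A\le C$. The only difference is organizational: the paper splits into parity cases and derives contradictions by multiplying two failed inequalities, whereas you split on whether $A\le\sqrt{\det Q}$ and prove the equivalent bounds $C\le 2\sqrt{\det Q}$ and $A+C-|B|\le 2\sqrt{\det Q}$ directly (contrapositive form of the same estimates).
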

The Theorem implies that $\varepsilon^*(L)=\varepsilon(L)$ and hence $\varepsilon(L)$ can be effectively computed by Prop.~\ref{algo}.

\begin{proof}
We will show that Prop.~\ref{Sesh-Num-refined} applies in this situation.
First note that, since the inequality in Prop.~\ref{Sesh-Num-refined} does not depend on $\gamma$ and $\delta$, it is enough to consider coprime pairs $(\alpha,\beta)$.
Let $Q=(A,B,C)$ be a reduced positive definite quadratic form.
We have to show that there exists a coprime pair $(a,b)$ such that
$$
		\frac{Q(a,b)}{\gcd(a\alpha+b\beta,d)}\le \sqrt{\frac{2\det(Q)}{d}} \,.\eqno (\ast)
$$
First, assume that $d=1$.
Then, by Prop.~\ref{reduced-properties}~(i) it follows that we have for $(1,0)$ the inequality
$$
		Q(1,0)=A\le \sqrt{\frac{4\det(Q)}{3}}\le \sqrt{2\det(Q)} 
$$
and, thus, we have found a coprime pair.

Next, we treat the case $d=2$.
We claim that at least one of the pairs $(1,0),(0,1),(1,\pm 1)$ satisfies the inequality $(\ast)$.
We will discuss three cases dependent on the divisibility of $\alpha$ and $\beta$ by $2$.
Note that $\alpha$ and $\beta$ cannot be both divisible by $2$ at the same time, since they are coprime.

First, we assume that $\alpha\equiv 0$ and $\beta\equiv 1$ modulo $2$.
Then we have for $(a,b)=(1,0)$
\be 
	\frac{Q(1,0)}{\gcd(\alpha,2)}=\frac{A}{2}\,\,{\le}\,\, \sqrt{\frac{\det(Q)}{3}},
\ee
where the last estimate follows from Prop.~\ref{reduced-properties}~(ii).
Thus, $(1,0)$ satisfies $(\ast)$.

Secondly, we treat the case $\alpha\equiv 1$ and $\beta\equiv 0$.
We assume to the contrary that $(1,0)$ and $(0,1)$ do not satisfy $(\ast)$.
So, we have
\be 
	\frac{Q(1,0)}{\gcd(\alpha,2)}=A > \sqrt{\det(Q)}\,,
\ee
and
\be 
	\frac{Q(0,1)}{\gcd(\beta,2)}=\frac{C}{2} > \sqrt{\det(Q)}\,.
\ee
Then, by multiplying both inequalities we get
\be 
	B^2 > 2AC\,,
\ee
but this is impossible, since $Q$ is reduced and, therefore, $|B|\le A\le C$.

Lastly, we treat the case $\alpha\equiv 1$ and $\beta\equiv 1$.
This time, we assume that $(1,0)$ and $(1,\pm 1)$ do not satisfy $(\ast)$.
Thus, we have
\be 
	\frac{Q(1,0)}{\gcd(\alpha,2)}=A > \sqrt{\det(Q)}\,,
\ee
and
\be 
	\frac{Q(1,\pm 1)}{\gcd(\alpha \pm \beta,2)}\ge\frac{A+C-|B|}{2} > \sqrt{\det(Q)}\,.
\ee
Then, after multiplying both inequalities we get
\be 
	2A^2+B^2>2AC+2A|B|\,
\ee
which is impossible, since $Q$ is reduced.
\end{proof}

We turn our attention to proving the converse of Thm.~\ref{Sesh-integer-d12}.
This amounts to finding a line bundle $L$ with a non-integer Seshadri constant.
One way to find such a bundle is by searching for irreducible principal polarizations on $X$, since these have $\eps(L)=\frac43$ by \cite[Prop.~2]{Steffens:remarks}.
The existence and number of irreducible principal polarizations has been studied by a number of authors (e.g.~\cite{Earle:Jacobian}, \cite{Hayashida-Nishi:genus-two}, \cite{IKO:Jacobian}, \cite{Kani:jacobian}, \cite{Lange:PP}).
As mentioned in the introduction, Kani answered this question to a great extent for $E_1\times E_2$:
\begin{theorem}[{\cite[Thm.~5]{Kani:jacobian}}]\label{kani:iPP-thm}
There exists no irreducible principal polarization on $X$ if and only if $d=1$ or if $d$ is an even idoneal number which is not divisible by $8$.
This is the case for
$$d\in \{1, 2, 4, 6, 10, 12, 18, 22, 28, 30, 42, 58, 60, 70, 78, 102, 130, 190, 210, 330, 462\}\,$$
and for at most one more $d^*>462$.
\end{theorem}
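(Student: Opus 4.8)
The plan is to read off Kani's theorem from Theorem~\ref{introthm-2} and the bijection between $P(X)$ and matrices described in the introduction. By that bijection a class of principal polarizations is given by a reduced datum $0\le 2B\le A\le C$ with $\gcd(A,B,C)=1$ and $AC-B^2=d$, and by Theorem~\ref{introthm-2} it is reducible exactly when $B=0$. Hence $X$ carries \emph{no} irreducible principal polarization if and only if every such datum has $B=0$. The first step is to recognise these data as reduced binary quadratic forms $[A,2B,C]$ of discriminant $-4d$: the chain $0\le 2B\le A\le C$ is exactly reducedness and $AC-B^2=d$ fixes the determinant. Two arithmetic points must be checked here. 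First, when $d$ is \emph{even} the condition $\gcd(A,B,C)=1$ coincides with form-primitivity $\gcd(A,2B,C)=1$, because a datum with $A,C$ even and $B$ odd forces $d=AC-B^2\equiv-1\pmod 4$, i.e. $d\equiv 3\pmod 4$; so for even $d$ the set of classes is exactly the set of reduced primitive forms of discriminant $-4d$. Second, a form has $B=0$ (is \emph{diagonal}) precisely when it comes from a coprime factorisation $d=AC$, and diagonal forms are ambiguous.

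The second step isolates the three regimes. If $d$ is odd with $d\ge 3$, the datum $(A,B,C)=(2,1,\tfrac{d+1}{2})$ is reduced, primitive and has $B=1>0$, so an irreducible principal polarization always exists; this also shows $d=1$ is the only odd value with none, which I would dispatch directly from $AC-B^2\ge 3B^2>1$ whenever $B>0$. If $8\mid d$ an irreducible principal polarization again always exists, exhibited by $(3,1,3)$ for $d=8$ and by $(4,2,\tfrac{d+4}{4})$ for $d\ge 16$, both of determinant $d$ with $B>0$. Thus the entire content of the theorem is concentrated in the remaining regime: $d$ even and $8\nmid d$.

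In that regime I would argue via the ambiguous classes. A reduced form with $B>0$ is either of \emph{boundary} type ($A=2B$ or $A=C$) or of strictly interior type; interior forms are non-ambiguous, while the boundary forms are the remaining ambiguous ones. A short $2$-adic computation --- writing $d=B(2C-B)$ in the first case and $d=(A-B)(A+B)$ in the second and using $\gcd(A,B,C)=1$ --- shows that the two factors always share their parity, so a primitive boundary form of determinant $d$ with $B>0$ can occur only when $d$ is odd or $8\mid d$, both of which are now excluded. Consequently, for $d$ even with $8\nmid d$ a form with $B>0$ must be interior, hence non-ambiguous; and such a form exists if and only if the class group of discriminant $-4d$ is not an elementary abelian $2$-group, i.e. if and only if $d$ is \emph{not} idoneal. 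Combining the three regimes yields exactly: no irreducible principal polarization iff $d=1$ or $d$ is an even idoneal number with $8\nmid d$.

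The final step is to convert this intrinsic characterization into the explicit list by intersecting the even non-multiples of $8$ with the classical table of idoneal numbers; the single undetermined entry $d^*>462$ is inherited verbatim from the still-open completeness of that table. I expect the main obstacle to be the even, $8\nmid d$ regime: one must set up the genus-theoretic dictionary carefully (in particular the clean identification of our classes with reduced forms of discriminant $-4d$, valid for even $d$ by the parity remark above) and then be sure that the parity analysis rules out \emph{all} ambiguous boundary forms, so that ``interior form'' and ``non-ambiguous form'' really coincide with ``$d$ non-idoneal''. The explicit constructions in the odd and $8\mid d$ regimes, by contrast, should be routine verifications.
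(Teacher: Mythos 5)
Your proposal is correct, and its overall skeleton matches the paper's: both reduce via Theorem~\ref{introthm-2} to asking whether every principally reduced form of determinant $d$ has $B=0$, both exhibit the explicit forms $(2,1,\tfrac{d+1}{2})$ for odd $d\ge 3$ and $(3,1,3)$, $(4,2,\tfrac{d+4}{4})$ for $8\mid d$, and the parity computations in the remaining regime are essentially identical. The genuine difference is how the idoneal property enters. The paper quotes Grube's elementary criterion (Prop.~\ref{Grube:crit-idoneal}): for idoneal $d$ every primitive representation $d+B^2=AC$ with $0<2B\le A\le C$ has $A=2B$ or $A=C$, and a four-case analysis mod $8$ then rules these out; the converse implication (no irreducible polarization $\Rightarrow$ $d$ idoneal) is a vacuous application of the same criterion. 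You instead run genus theory directly: reduced ambiguous forms are exactly the diagonal ($B=0$) and boundary ($A=2B$ or $A=C$) ones, your $2$-adic computation excludes primitive boundary forms when $d$ is even with $8\nmid d$, so in that regime a form with $B>0$ exists iff a non-ambiguous class exists iff the class group of discriminant $-4d$ is not elementary abelian of exponent $2$, i.e.\ iff $d$ is not idoneal. In effect you reprove the relevant direction of Grube's criterion rather than citing it, at the price of invoking the standard equivalence between ``idoneal'' and ``one class per genus''; the paper's route is more self-contained modulo the cited criterion, while yours explains conceptually why idoneal numbers and the mod-$8$ condition appear. Your write-up also handles two points the paper glosses over: the case $d=1$ (the paper's intermediate claim that for idoneal $d$ no representation exists iff $d\equiv 2,4,6 \pmod 8$ fails literally at $d=1$, where $(2,1,1)$ is not reduced), and the compatibility $\gcd(A,B,C)=1\Leftrightarrow\gcd(A,2B,C)=1$ for even $d$, which is needed before Grube's criterion (stated for $\gcd(A,2B,C)=1$) can legitimately be applied. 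Both arguments end the same way, by intersecting with the classical table of idoneal numbers and inheriting the ``at most one more $d^*>462$'' caveat from Weinberger's theorem.
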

For a more thorough discussion of idoneal numbers and the existence of the additional number $d^*$ we refer to \cite{Kani:idoneal} and \cite{Kani:jacobian}.

We will now show that there in fact exists an ample line bundle with a non-integer Seshadri constant for \textit{every} $d\ge 3$.
(Such line bundles cannot be principal polarizations in the cases listed in Thm.~\ref{kani:iPP-thm}.)
For this, we will use Thm.~2 from \cite{BGS:integer}.
So, the assertion follows, if we show that there exists a line bundle $L$ such that $\varepsilon(L)$ is not computed by an elliptic curve and that $\sqrt{L^2}$ is not an integer.

\begin{theorem}\label{Sesh-no-submaximal}
If $d\ge 3$, then there exists a line bundle on $X$, which does not have a weakly submaximal elliptic curve. 
\end{theorem}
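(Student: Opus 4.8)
The plan is to negate condition~(ii) of Prop.~\ref{Sesh-Num}: for each $d\ge 3$ I would exhibit a single ample line bundle $L$ all of whose elliptic curves $N$ satisfy $L\cdot N>\sqrt{L^2}$. By Prop.~\ref{intersection-elli-lb} and Remark~\ref{NS-Matrix} this amounts to producing a positive definite matrix $M_L$, with associated form $Q(a,b)=a_1a^2-2da_3\,ab+da_2b^2$, such that
\[
	\frac{Q(a,b)}{\gcd(a,d)}>\sqrt{L^2}=\sqrt{2\det(M_L)/d}
\]
for every coprime pair $(a,b)$. The first thing I would do is read off the constraints coming from the two fibres $F_1=N_{0,1}$ and $F_2=N_{1,0}$: since $L\cdot F_2=a_1$ and $L\cdot F_1=a_2$, any candidate $L$ must satisfy $a_1,a_2>\sqrt{L^2}$, which forces $d a_3^2<a_1a_2<2d a_3^2$. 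This pins the coefficients into a narrow band just above the ampleness threshold of Cor.~\ref{ample-crit} and keeps $\sqrt{L^2}$ small compared with the $a_i$.

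Inside this band I would make the symmetric ansatz $a_1=a_2=p$, $a_3=c$, giving $Q(a,b)=p\,a^2-2dc\,ab+dp\,b^2$ and $\sqrt{L^2}=\sqrt{2(p^2-dc^2)}$. Completing the square,
\[
	Q(a,b)=p\Bigl(a-\frac{dc}{p}b\Bigr)^2+\frac{d(p^2-dc^2)}{p}\,b^2,
\]
so $Q$ is small precisely on lattice points close to the ridge $a=\frac{dc}{p}b$. The verification then organises itself by the value $g=\gcd(a,d)$: for each divisor $g\mid d$ one bounds $Q$ from below on the sublattice $\{g\mid a\}$ and compares it with $g\sqrt{L^2}$. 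Since $Q$ grows quadratically while $\gcd(a,d)\le d$ stays bounded, all but finitely many classes are automatic, and one is left with a finite check at the few lattice points nearest the ridge, handled together with the fibre inequalities $a_1,a_2>\sqrt{L^2}$.

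The step I expect to be the main obstacle is the choice of $p$ and $c$ as functions of $d$ that makes this finite check succeed \emph{uniformly}. The danger is that the integer nearest $\frac{dc}{p}b$ may share a large factor with $d$, which divides the corresponding value $Q(a,b)/\gcd(a,d)$ by that factor; for even $d$, for example, the ridge can run close to an even value of $a$. This is exactly why naive choices break down: the minimal Pell solution $p^2-dc^2=1$ yields a valid bundle for $d=3$ (namely $2F_1+2F_2+\nabla$) but fails for $d=6$, where the shortest vector $(2,1)$ has $\gcd(2,6)=2$ and becomes weakly submaximal. The resolution is to take $p/c$ as a sufficiently good rational approximation to an appropriate multiple of $\sqrt d$, tuned so that the short vectors of $Q$ either avoid the large divisors of $d$ or have form-value large enough to absorb the drop by $\gcd(a,d)$. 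Carrying this out across all parities and factorisation types of $d$ is the crux; once admissible $p,c$ are fixed, the remaining estimates are elementary.
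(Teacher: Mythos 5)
Your setup is sound and your two sanity checks are accurate (the Pell bundle $2F_1+2F_2+\nabla$ does work for $d=3$, and the Pell choice $p=5$, $c=2$ does fail for $d=6$ via the vector $(2,1)$), but the proposal has a genuine gap: the entire mathematical content of the theorem is the uniform-in-$d$ construction that you explicitly defer as ``the crux.'' Everything you actually supply --- the reformulation as $Q(a,b)/\gcd(a,d)>\sqrt{2\det(M_L)/d}$ for all coprime pairs, the necessary band $da_3^2<a_1a_2<2da_3^2$ coming from the two fibres, the completion of the square, the stratification by $g=\gcd(a,d)$ --- is either already Prop.~\ref{Sesh-Num} or elementary bookkeeping around it. No choice of $p,c$ is exhibited for general $d$, and nothing is proved about any choice, so no line bundle without a weakly submaximal elliptic curve is produced for any $d\ge 7$, let alone for all $d\ge 3$. ``Take $p/c$ a sufficiently good rational approximation to an appropriate multiple of $\sqrt d$, tuned so that short vectors avoid large divisors of $d$'' is a restatement of what must be proved, not an argument.

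The difficulty you correctly identify is structural in your formulation, and it is worth seeing how the paper removes it. Working directly in the standard basis, the gcd weight is frozen at $\gcd(a,d)$, so your two parameters $p,c$ must simultaneously (1) keep the values of $Q$ at short vectors large and (2) keep those short vectors away from residues where $\gcd(a,d)$ is large --- one knob for two jobs, which is exactly why the Pell choice collapses at $d=6$. The paper instead uses Prop.~\ref{Sesh-Num-refined}, where a line bundle is encoded by a \emph{pair}: a reduced form $Q$, which alone determines the values, and a base change $S\in\SL_2(\Z)$, which alone determines the gcd weight $\gcd(a\alpha+b\beta,d)$. It then fixes the single form $Q=(2,1,d)$, whose coprime pairs with $Q(a,b)<2d$ are explicitly enumerable (only $(1,0)$ and pairs with $b=\pm1$, $|a|\le\frac14(1+\sqrt{8d+1})$, by Prop.~\ref{reduced-properties}), and chooses the shear $S=\left(\begin{array}{cc}1&\lceil d/2\rceil\\0&1\end{array}\right)$ so that on all those vectors $a+b\lceil d/2\rceil$ lands in $(0,d)\cup(-d,0)$ and hence is never divisible by $d$; all remaining pairs are killed by the trivial bounds $Q(a,b)\ge d$ (second minimum, when the gcd is a proper divisor of $d$) or $Q(a,b)\ge 2d$ (when the gcd is $d$), giving the uniform bound $2>\sqrt{(8d-1)/(2d)}$, with $d=3$ checked by hand. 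To complete your symmetric ansatz you would need an analogue of this decoupling of values from gcd weights, or a genuinely new number-theoretic argument producing admissible $p,c$ for every $d$; as written, your proof stops exactly where the theorem begins.
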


\begin{proof}
We will use Prop.~\ref{Sesh-Num-refined}.
So, the issue is to exhibit a reduced positive definite form $Q=(A,B,C)$ and a matrix
	$S=\left(
		\begin{array}{cc}
			\alpha & \beta   \\
			\gamma & \delta
		\end{array}
	\right)\in\SL_2(\Z)$
	such that for every coprime pair $(a,b)$ the inequality
	\be 
		\frac{Q(a,b)}{\gcd(a\alpha+b\beta,d)}
		> \sqrt{\frac{2\det(Q)}{d}}
	\ee
is satisfied.
We will show that the quadratic form $Q=(2,1,d)$ and the matrix
\be
	S=\left(
		\begin{array}{cc}
			1 & \Big\lceil \frac{d}{2} \Big\rceil   \\
			0 & 1
		\end{array}
	\right)\in\SL_2(\Z)
\ee
satisfy for every coprime pair $(a,b)$ the even stronger inequality
	$$
		\frac{Q(a,b)}{\gcd(a+b\lceil \frac{d}{2} \rceil,d)}
		\ge 2 > \sqrt{\frac{8d-1}{2d}}=\sqrt{\frac{2\det(Q)}{d}}\,. \eqno (\ast)
	$$

The idea is as follows:
We begin by exhibiting two properties a coprime pair $(a,b)$ must satisfy, if it contradicts the inequality $(\ast)$.
Then we show that no coprime pair can satisfy both properties at the same time.

First, we observe that it is enough to consider coprime pairs $(a,b)$ with
\be 
	Q(a,b)<2d\,,
\ee 
because any coprime pair $(a,b)$ with $Q(a,b)\ge 2d$ will satisfy the inequality $(\ast)$ since the denominator is at most $d$.

Secondly, we claim that it is enough to consider coprime pairs $(a,b)\neq (1,0)$ such that
\be
	d \,\,\mbox{ divides } \,\, a+b \Big\lceil \frac{d}{2} \Big\rceil\,.
\ee
To this end, we know by Prop.~\ref{reduced-properties} that the two smallest non-zero integers represented by $Q$ with coprime pairs are $Q(1,0)=2$ and $Q(0,1)=d$.
Since the pair $(1,0)$ satisfies the inequality $(\ast)$, we may assume that $(a,b)\neq (1,0)$ and, thus, $Q(a,b)$ is at least $d$.
If such a coprime pair has $\gcd(a+b\lceil \frac{d}{2} \rceil,d)=q<d$, then we conclude
\be
	\frac{Q(a,b)}{\gcd(a+b\lceil \frac{d}{2} \rceil,d) }\ge\frac{d}{q}\ge 2,
\ee
as for any divisor $q$ of $d$ with $q\neq d$ the quotient $\frac{d}{q}$ is at least $2$. 
Consequently, we are left with coprime pairs $(a,b)\neq (1,0)$ such that $\gcd(a+b\lceil \frac{d}{2} \rceil,d)=d$.

Next, we will create a set of pairs which will contain all coprime pairs $(a,b)\neq (1,0)$ such that $Q(a,b)<2d$.
We claim that if $|b|> 1$, then $Q(a,b)$ is at least $2d$.
For this, we consider for a fixed $b$ the function $x\mapsto Q(x,b)$, which has the minimum in $\frac{-b}{4}$. 
It follows that we have for every $x\in\R$
\be
	Q(x,b)\ge b^2\,\frac{8d-1}{8} 
\ee
and for $|b|\ge 2$ we obtain $Q(x,b)\ge 2d$.

Thus, we only have to consider the cases $b=1$ and $b=-1$.
Now, we calculate the range of $a$ depending on $b\in\{-1,1\}$ by estimating the possible values of $a$ such that $Q(a,b)=2a^2+ab+db^2<2d$ holds.
We conclude by solving these two quadratic inequalities that
$$
	|a|\le\frac{1}{4}(1 + \sqrt{8d+1})  \,. \eqno (\ast\ast)
$$
It follows that all coprime pairs $(a,b)\neq (1,0)$ with $Q(a,b)< 2d$ are contained in (the possibly bigger set):
\be
	Z=\{(n,\pm 1)\,\mid\, 0\le n\le \Big\lfloor\frac{1}{4}(1 + \sqrt{8d+1})\Big\rfloor, n\in\N \,\}\,.
\ee 
The crucial point is that if $d\ge 4$, then we can derive the following bounds for $a+b\lceil \frac{d}{2}\rceil$: If $b=1$, then
\be 
	 0 	  < 	\Big\lceil \frac{d}{2} \Big\rceil  	\le 	a+ \Big\lceil \frac{d}{2} \Big\rceil   	\le   \Big\lfloor\frac{1}{4}(1 + \sqrt{8d+1})\Big\rfloor + \Big\lceil \frac{d}{2} \Big\rceil   <  d\,
\ee
and if $b=-1$, then
\be 
	-d 	  <  -	\Big\lceil \frac{d}{2} \Big\rceil  	\le 	a- \Big\lceil \frac{d}{2} \Big\rceil   	\le   \Big\lfloor\frac{1}{4}(1 + \sqrt{8d+1})\Big\rfloor - \Big\lceil \frac{d}{2} \Big\rceil   <  0\,.
\ee
(This is also the moment where the argument hinges on the choice of the matrix $S$.)
As a consequence, the sum $a+b\lceil \frac{d}{2}\rceil$ always lies in the interval $(0,d)$ or in $(-d,0)$ and, therefore, is not divisible by $d$.
So, we conclude that every coprime pair $(a,b)$ satisfies $(\ast)$.

It remains to treat the case $d=3$. 
The inequality $(\ast\ast)$ is not helpful in this case, since $Z$ will then contain a pair $(a,b)$, namely $(1,1)$, with $\gcd(a+b\lceil \frac{3}{2} \rceil,3)=3$, which, however, does not satisfy $6=Q(1,1) < 6$.
Instead, in this case we can calculate all coprime pairs $(a,b)\neq(1,0)$ with $Q(a,b)=2a^2+ab+3b^2<6$ explicitly:
\be
	Z_3=\{(0,1), (1,-1)\}\,.
\ee
We see that none of the pairs $(a,b)\in Z_3$ satisfies $\gcd(a+2b,3)=3$, which completes the proof.
\end{proof}

So far we have found line bundles $L$ whose Seshadri constants are not computed by elliptic curves.
This concludes the first step of the proof of the converse of Thm.~\ref{Sesh-integer-d12}.
It still might be possible that the Seshadri constants are all integers if $\varepsilon(L)=\sqrt{L^2}\in\Z$ (see \cite[Ex.~1.1]{BGS:integer}).
To exclude this, we have to show that $L^2$ is not a square number.
With that we can deduce the even stronger statement:

\begin{theorem}\label{Sesh-non-integer}
If $d\ge 3$, then there exists a line bundle on $X$ with a non-integer Seshadri constant.
\end{theorem}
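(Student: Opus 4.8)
The plan is to take $L$ to be precisely the line bundle produced in the proof of Thm.~\ref{Sesh-no-submaximal} and then to verify the single remaining hypothesis of \cite[Thm.~2]{BGS:integer}, namely that $\sqrt{L^2}$ is irrational. Recall from the proof of Prop.~\ref{Sesh-Num-refined} that the reduced form $Q=(2,1,d)$ together with the base change $S\in\SL_2(\Z)$ chosen in Thm.~\ref{Sesh-no-submaximal} corresponds to the line bundle $L$ with matrix $M_L=2d\cdot S^T Q_{\mathrm{mat}}S$, where $Q_{\mathrm{mat}}$ denotes the symmetric matrix of $Q$; this matrix is positive definite, so $L$ is ample by Cor.~\ref{ample-crit}. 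First I would record that $\varepsilon(L)$ is \emph{not} computed by an elliptic curve: if some elliptic curve $N$ computed it, then $L\cdot N=\varepsilon(L)\le\sqrt{L^2}$ by the standard bound $\varepsilon(L)\le\sqrt{L^2}$, so $N$ would be weakly submaximal, contradicting Thm.~\ref{Sesh-no-submaximal}.

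Next I would compute $L^2$ by means of Cor.~\ref{ample-crit}. Since $\det(S)=1$, one has $\det(M_L)=(2d)^2\det(Q_{\mathrm{mat}})=4d^2\cdot\tfrac{8d-1}{4}=d^2(8d-1)$, and hence
$$
   L^2=\frac{2\det(M_L)}{d}=2d(8d-1)\,.
$$
(One checks from Prop.~\ref{ample-lb} that this value does not depend on the entry $\lceil d/2\rceil$ of $S$, as indeed it must not.) It then remains to show that $2d(8d-1)$ is never a perfect square.

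This congruence computation is the arithmetic crux, and the one step I expect to carry the whole argument. Since $8d-1$ is odd and satisfies $8d-1\equiv -1\tmod d$, the two factors $2d$ and $8d-1$ are coprime, so their product can be a perfect square only if each factor separately is one. But $8d-1\equiv 7\tmod 8$, whereas every perfect square is congruent to $0$, $1$ or $4$ modulo $8$; hence $8d-1$ is not a square, and therefore neither is $2d(8d-1)$. Consequently $\sqrt{L^2}$ is irrational, in particular not an integer.

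Finally I would assemble the pieces through \cite[Thm.~2]{BGS:integer}: the ample line bundle $L$ satisfies \emph{neither} $\sqrt{L^2}\in\Z$ \emph{nor} the condition that $\varepsilon(L)$ be computed by an elliptic curve, and therefore $\varepsilon(L)$ cannot be an integer, as desired. The main obstacle is organisational rather than technical---one must arrange that a \emph{single} bundle $L$ simultaneously defeats both alternatives of \cite[Thm.~2]{BGS:integer}---and the congruence $8d-1\equiv 7\tmod 8$ is exactly what secures the self-intersection side at no extra cost.
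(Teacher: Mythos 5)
Your construction is identical to the paper's: the same bundle $L$ obtained from the form $Q=(2,1,d)$ scaled by $2d$ and transformed by the matrix $S$ of Thm.~\ref{Sesh-no-submaximal}, the same appeal to \cite[Thm.~2]{BGS:integer}, and the same value $L^2=2d(8d-1)=16d^2-2d$. Where you genuinely diverge is the arithmetic core, the proof that $L^2$ is not a perfect square. The paper does this by cases on the $2$-adic valuation of $d$: for odd $d$ it reduces modulo $4$; for $d=2^nd'$ with $n$ even it reduces modulo $4$ again; and for $n$ odd it needs an extra argument showing that $d'$ would have to be an odd square before reducing modulo $4$ a third time. Your argument---$\gcd(2d,8d-1)=1$, so a square product of coprime positive factors forces each factor to be a square, while $8d-1\equiv 7$ modulo $8$ is never a square---collapses all of this into one uniform step. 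It is correct (any common prime of $2d$ and $8d-1$ divides $4\cdot 2d-(8d-1)=1$), and it is distinctly cleaner than the paper's case analysis; what the paper's route buys is nothing beyond what yours does, so this is a genuine simplification.

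One imprecision should be fixed. Your final sentence concludes that $\varepsilon(L)$ itself cannot be an integer. That is more than \cite[Thm.~2]{BGS:integer}, as cited here, delivers: that theorem is an equivalence concerning \emph{all} ample line bundles on $X$ simultaneously, so from a single $L$ violating both alternatives one may only conclude that \emph{some} line bundle on $X$ has a non-integer Seshadri constant---not necessarily $L$. The paper stresses exactly this in the remark following its proof. Since the statement to be proved is only an existence statement, your argument still establishes it; just phrase the conclusion as existence rather than as a property of the specific bundle $L$.
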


\begin{proof}
We will apply Thm.~2 from \cite{BGS:integer}.
Thus, the claim follows, if we show that there exists a line bundle $L$ such that $\varepsilon(L)$ is not computed by an elliptic curve and that $\sqrt{L^2}$ is not an integer.
To find such a line bundle, we will use the binary quadratic form $Q=(2,1,d)$ and matrix $S=\left(
		\begin{array}{cc}
			1 & \Big\lceil \frac{d}{2} \Big\rceil   \\
			0 & 1
		\end{array}
	\right)\in\SL_2(\Z)$
given in Thm.~\ref{Sesh-no-submaximal} and apply the arguments from the proof of Prop.~\ref{Sesh-Num-refined}.

So, we consider the scaled quadratic form $Q'=2d Q =(4d,2d,2d^2)$ together with the matrix $S$.
By using the change of basis $S$ we get the positive definite matrix
\be
	M & = & S^T
	\left(
		\begin{array}{cc}
			4d & d   \\
			d & 2d^2
		\end{array}
	\right)
	S=\left(
		\begin{array}{cc}
			4d & d\left(4 \Big\lceil\frac{d}{2}\Big\rceil + 1\right)   \\
			d\left(4 \Big\lceil\frac{d}{2}\Big\rceil + 1\right) & d\left(4\Big\lceil\frac{d}{2}\Big\rceil^2 + 2 \Big\lceil\frac{d}{2}\Big\rceil + 2d\right)
		\end{array}
	\right)\,,
\ee
which by Rmk.~\ref{NS-Matrix} corresponds to the class of the ample line bundle 
\be 
	L=\mathcal O_{X}\left(4dF_1+\left(4\Big\lceil\frac{d}{2}\Big\rceil^2 + 2 \Big\lceil\frac{d}{2}\Big\rceil + 2d\right)F_2-\left(4 \Big\lceil\frac{d}{2}\Big\rceil + 1\right)\nabla\right)\,,
\ee
which has no weakly submaximal elliptic curve by Thm.~\ref{Sesh-no-submaximal}.

Thus, it is left to show that $L^2$ is not a perfect square.
By using Cor.~\ref{ample-crit} we have for the self-intersection
\be 
	L^2=\frac{2\det(M)}{d}=16d^2-2d\,.
\ee
If $d$ is odd, then $16d^2-2d\equiv 2$ modulo $4$ and hence $L^2$ is not a perfect square.
If $d$ is even, then we write $d=2^nd'$ for an odd integer $d'\ge 1$ and $n\ge 1$.
If $n$ is even, then $L^2=2^n(2^{n+4}\,d'^2-2d')$. 
Since $2^n$ is a square number, it is enough to show that $2^{n+4}\,d'^2-2d'$ can not be a perfect square.
This, however, can not happen since it is $2$ modulo $4$.

Lastly, if $n$ is odd, then $L^2=2^{n+1}(2^{n+3}\,d'^2-d')$.
Suppose to the contrary that $L^2$ is a perfect square.
Then there exists an integer $r$ such that $2^{n+3}\,d'^2-d'=r^2$.
We claim that $d'$ already is a perfect square.
Let $p$ be a prime number such that $p^{2n-1}$ is a divisor of $d'$.
It follows that $p^{2n-1}$ must be a divisor of $r^2$ and since $r^2$ is a perfect square it is divisible by $p^{2n}$.
Thus, $d'= 2^{n+3}\,d'^2-r^2$ is divisible by $p^{2n}$ as well and, therefore, $d'$ must be an odd square number.
Then, however, $2^{n+3}\,d'^2-d'\equiv 3$ modulo $4$ and hence $L^2$ cannot be a square number, which is a contradiction.
\end{proof}

\begin{remark}
Note that the application of Thm.~2 from \cite{BGS:integer} does not yield an explicit line bundle with a fractional Seshadri constant.
Thus, the line bundles given in the proof do not necessarily have a non-integer Seshadri constant themselves, but they imply their existence.
For such an example see \cite[Prop.~2.8]{BGS:integer}.
\end{remark}

%*****************************************************************************

\section{Classification of Principal Polarizations on $X$}\label{class-PP}

In this section we will characterize the isomorphism classes of reducible and irreducible principal polarization.
We give a brief summary of notation, which can be found in \cite{Lange:PP}.
Two principal polarizations $L_1$ and $L_2$ are \textit{equivalent} if there exists an automorphism $\psi$ of $X$ such that $\psi^*L_1 \equiv L_2$. 
We denote the isomorphism classes of principal polarizations on $X$ by
\be 
	P(X)=\{L\in\NS(X)\,\mid\, L \mbox{ ample }, L^2=2\}\,/\sim\,.
\ee

Recall that we have the equality $\frac{dL^2}{2}=\det(M_L)$ and, thus, by Rmk.~\ref{NS-Matrix} a principal polarization $L$ is given by a positive definite matrix $M_L=\left(
         \begin{array}{cc}
            a_1 & -da_3   \\
            -da_3 & da_2
         \end{array}
		\right)$ with determinant $d$.
Furthermore, since the self-intersection is $2$, it follows that the matrix $M_L$ is primitive, that is $\gcd(a_1,da_2,da_3)=1$.
Thus, the quadratic form $M_L$ is equivalent to a unique reduced form $Q=(A,2B,C)$ with $0\le B\le A \le C$ and $\gcd(A,B,C)=1$; we will call such forms \textit{principally reduced forms of determinant $d$}.
(Note that the matrices $M_L$ in Rmk.~\ref{NS-Matrix} only have integer entries, thus the second entry of $Q$ will be even.)
With arguments from the proof of \cite[Thm.~5.1.]{Lange:PP} the converse is true, that is, for any principally reduced form $Q$ of determinant $d$ there exists a principal polarization $L$ on $X$ such that $M_L$ is equivalent to $Q$.

Moreover, it follows form \cite[Chp.~5.]{Lange:PP} that two principal polarizations $L_1$ and $L_2$ are equivalent if and only if the positive definite forms $M_{L_1}$ and $M_{L_2}$ are equivalent to the same principally reduced form.
Thus, we have a natural bijection between the isomorphism classes of principal polarizations and principally reduced forms:

$$
	P(X)\cong \{\mbox{principally reduced forms of determinant } d\}\,.\eqno (\ast)
$$

Lange studied in \cite{Lange:PP} the number of isomorphism classes of principal polarization by using the the isomorphism $\NS(X)\cong \Endsym(X)$.
Thus, his approach is slightly different from ours.
However, one can show that the matrix $M_L$ induced by the intersection of $L$ with elliptic curves (see Rmk.~\ref{NS-Matrix}) is closely related to the image of $L$ under the natural isomorphism $\Phi:\NS(X)\to \Endsym(X)$ given by the principal polarization $L_0=\mathcal O_{X}\left(F_1+F_2\right)$ \cite[Prop.~5.2.1]{Birkenhake-Lange:CAV}.

Using $(\ast)$, we now characterize the principally reduced forms of reducible principal polarizations $L$, i.e., the polarizations such that $L\equiv E_1'+E_2'$ with elliptic curves $E_i'$ with $E_1'\cdot E_2'=1$.

\begin{theorem}\label{crit-irred-PP}
Let $Q=(A,2B,C)$ be a principally reduced form of determinant $d$.
Then the principal polarizations in the isomorphism class given by $Q$ are reducible principal polarizations if and only if $B=0$.
\end{theorem}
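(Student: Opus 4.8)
The plan is first to replace reducibility by a single intersection-theoretic condition, and then to translate that condition into the language of reduced forms. Throughout write $q=M_L$ for the form attached to the principal polarization $L$ by Rmk.~\ref{NS-Matrix}, so that for a coprime pair $(a,b)$ one has $L\cdot N_{a,b}=q(a,b)/\gcd(a,d)$ by Prop.~\ref{intersection-elli-lb} and Lemma~\ref{numclass-elli}. The key preliminary step is the equivalence
\[
   L\ \text{reducible}\iff \text{there is an elliptic curve } E\subset X \text{ with } L\cdot E=1.
\]
For the nontrivial direction I would set $E':=L-E$ in $\NS(X)$ and note $(E')^2=L^2-2L\cdot E+E^2=0$; then Prop.~\ref{crit-elli-2} gives $E'\equiv mE''$ with $E''$ elliptic and $m\ge1$, the sign coming from $L\cdot E'=L^2-1=1>0$. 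Since $E\cdot E'=L\cdot E-E^2=1$ and $E\cdot E'=m\,(E\cdot E'')$, we must have $m=1$, so $L\equiv E+E''$ with $E\cdot E''=1$ is a reducible decomposition; the reverse implication is immediate. By Lemma~\ref{numclass-elli} the curve $E$ may be taken of the form $N_{a,b}$ with $(a,b)$ coprime, so the condition $L\cdot E=1$ reads $q(a,b)=\gcd(a,d)$.

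To prove ``reducible $\Rightarrow B=0$'', fix such a coprime pair, put $g:=\gcd(a,d)$, so $q(a,b)=g$ and both $g\mid a$ and $g\mid d$. I would extend $(a,b)$ to an integral basis, i.e.\ pick $S\in\SL_2(\Z)$ whose first column is $(a,b)^T$, so that $S^TqS$ is a form $(g,2\mu,w)$ with $gw-\mu^2=d$. The crucial point is that the off-diagonal entry $\mu=a_1aa'-da_3(ab'+a'b)+da_2bb'$ (with $(a',b')$ the second column of $S$) has every summand divisible by $a$ or by $d$, hence by $g$; thus $g\mid\mu$. One step of Gauss reduction (Thm.~\ref{reduced-forms}) then clears the middle coefficient completely, yielding $q\sim(g,0,d/g)$, which after ordering the two diagonal entries is a reduced form with vanishing middle coefficient. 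Its content is $1$ because $M_L$ is primitive, so this is the principally reduced form of $L$ and $B=0$.

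For ``$B=0\Rightarrow$ reducible'', the hypothesis means $d=AC$ with $\gcd(A,C)=1$ and $A\le C$. I would realize this factorization by an explicit product: choose $y,y'$ with $Ay-Cy'=1$ (possible since $\gcd(A,C)=1$, and this automatically makes $(C,y)$ and $(A,y')$ coprime) and set $E:=N_{C,y}$, $E':=N_{A,y'}$. The numerical classes of Lemma~\ref{numclass-elli} give
\[
   E\cdot E'=\frac{d\,(Ay-Cy')^2}{\gcd(C,d)\,\gcd(A,d)}=\frac{AC}{C\cdot A}=1,
\]
so $L:=E+E'$ satisfies $L^2=2$ and is ample (its matrix $M_L$ is positive definite, Cor.~\ref{ample-crit}), i.e.\ a reducible principal polarization. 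Applying the previous paragraph to this $L$ gives reduced form $(\gcd(C,d),0,d/\gcd(C,d))=(C,0,A)$, which reorders to the prescribed $(A,0,C)$. Thus the class of $Q$ contains a reducible polarization, and since an automorphism of $X$ sends elliptic curves to elliptic curves preserving intersection numbers, reducibility is constant on each isomorphism class; by the bijection between $P(X)$ and principally reduced forms, every polarization in the class of $Q$ is then reducible.

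The one genuinely delicate ingredient is the normalizing factor $\gcd(a,d)$ in $L\cdot N_{a,b}$: it prevents one from detecting reducibility from the set of values represented by $q$ and forces attention on the representing vectors themselves. This is resolved in the two decisive lines above — the congruence $g\mid\mu$, which together with $gw-\mu^2=d$ pins the reduced form down to the diagonal shape $(g,0,d/g)$, and, in the converse, the deliberate choice $a=C$, $a'=A$ arranging that the $\gcd$ factors exactly cancel the determinant $d=AC$. I expect these two points to carry all the arithmetic content; everything else is formal.
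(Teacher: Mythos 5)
Your proof is correct, and it takes a genuinely different route from the paper's in both directions. For ``reducible $\Rightarrow B=0$'' the paper uses a full decomposition $L\equiv N_{\alpha,\beta}+N_{\gamma,\delta}$ and takes as base change the matrix built from the two pairs; it must then prove that this matrix is unimodular, which costs a coprimality argument for $\alpha,\gamma$ and a genuinely geometric input (the minimal degree of an isogeny $E_1\times E_1\to E_1\times E_2$ is at least $d$), after which $S^TM_LS=\Diag(\gcd(\alpha,d),\gcd(\gamma,d))$ is computed explicitly. You instead use a single curve with $L\cdot N_{a,b}=1$, complete $(a,b)$ to an \emph{arbitrary} element of $\SL_2(\Z)$, and exploit the congruence $g\mid\mu$ forced by the shape of $M_L$ in Rmk.~\ref{NS-Matrix}; one Gauss reduction step then diagonalizes the form. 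This is shorter, purely arithmetic, and avoids the isogeny-degree fact altogether. For ``$B=0\Rightarrow$ reducible'' the paper argues directly on an arbitrary polarization $L$ in the class, deducing from divisibility by $d$ of the entries of $M_L=S^T\Diag(A,C)\,S$ that $\gcd(\beta,d)=C$, whence $L\cdot N_{-\beta,\delta}=1$; you instead construct one explicit reducible polarization $N_{C,y}+N_{A,y'}$ with $Ay-Cy'=1$ whose principally reduced form is $Q$, and then spread reducibility over the whole class via automorphism invariance and the bijection $(\ast)$. Your version needs that extra (easy, and correctly supplied) invariance remark, but in exchange it is constructive: it exhibits explicit reducible representatives and in particular reproves the existence half of $(\ast)$ for diagonal forms. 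Finally, you prove the equivalence ``$L$ reducible $\iff$ there is an elliptic curve $E$ with $L\cdot E=1$'' via Prop.~\ref{crit-elli-1} and Prop.~\ref{crit-elli-2}, a step the paper uses only implicitly (``it is enough to find an elliptic curve such that the intersection with $L$ is $1$''); making it explicit is a gain in completeness.
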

In other words, irreducible principal polarizations correspond to principally reduced forms with $B\neq 0$.
\begin{proof}
First, assume that $B=0$.
Let $S=\left(
		\begin{array}{cc}
			\alpha & \beta   \\
			\gamma & \delta
		\end{array}
	\right)\in\GL_2(\Z)$ be a base change such that $Q$ corresponds to a principal polarization $L$.
It is enough to find an elliptic curve such that the intersection with $L$ is $1$.
It follows for
\be 
	M_L:=S^T \left(
		\begin{array}{cc}
			A & 0   \\
			0 & C
		\end{array}
	\right) S,
\ee
that $d$ divides $m_{2,2}=\beta^2 A+ \delta^2C$ and $m_{1,2}=\alpha\beta A+ \delta\gamma C$.
By using $AC=d$ and the coprime properties of $\alpha, \beta,\gamma$ and $\delta$ resulting from the fact that the determinant of $S$ is $1$, we can deduce that $C$ is a divisor of $\beta$ and $A$ is a divisor of $\delta$.
Therefore, we see that $\gcd(\beta,d)= C$.
Hence, we obtain
\be 
	\frac{Q(0,1)}{\gcd(\beta,d)}= 1.
\ee
Thus, the elliptic curve $N_{-\beta,\delta}$ which corresponds to $S^{-1}(0,1)$ has intersection number $1$ with $L$.
Thus, $L$ is reducible.

Let $L$ be a reducible principal polarization.
The issue is to find a base change $S\in\GL_2(\Z)$ such that $S^TM_LS$ is a diagonal matrix.
Since $L$ is reducible, there exists two elliptic curves $E_1'$ and $E_2'$ on $X$ such that $L\equiv E_1'+E_2'$.
By Lemma~\ref{Haya-Nishi} there are coprime pairs $(\alpha,\beta)$ and $(\gamma,\delta)$ such that $E_1'\equiv N_{\alpha,\beta}$ and $E_2'\equiv N_{\gamma,\delta}$ with $\deg(\sigma_{\alpha,\beta})=\gcd(\alpha,d)$ and $\deg(\sigma_{\gamma,\delta})=\gcd(\gamma,d)$.
We will show that the matrix 
$$S:=\left(
		\begin{array}{cc}
			\alpha & \gamma   \\
			\beta & \delta
		\end{array}
	\right)$$ 
has the required properties.
	
For that we first claim that $\alpha$ and $\gamma$ are coprime.
Let $p$ be any prime number and we factorize $\alpha=p^{r}\alpha'$, $\gamma=p^s\gamma'$ and $d=p^td'$.
By Lemma~\ref{numclass-elli} the numerical classes are given by:
\be 
	N_{\alpha,\beta}\equiv & \frac{d\beta^2 F_1 + \alpha^2 F_2 + \alpha\beta \nabla}{\gcd(\alpha,d)}=\frac{p^td'\beta^2 F_1 + p^{2r}\alpha'^2 F_2 + p^{r}\alpha'\beta \nabla}{\gcd(p^r,p^t)\cdot\gcd(\alpha',d')}\,,\\
	\vspace{-0.3cm}\\
	N_{\gamma,\delta}\equiv & \frac{d\delta^2 F_1 + \gamma^2 F_2 + \gamma\delta \nabla}{\gcd(\gamma,d)}=\frac{p^td'\delta^2 F_1 + p^{2s}\gamma'^2 F_2 + p^s\gamma'\delta \nabla}{\gcd(p^s,p^t)\cdot\gcd(\gamma',d')}\,.
\ee
If $p$ is a common factor of $\alpha$ and $\gamma$, then $1\le r,s$.
Without loss of generality we assume that $1\le r\le s$. 
Then we deduce by explicitly calculating the intersection number $N_{\alpha,\beta}\cdot N_{\gamma,\delta}=1$ that 
\be 
1=\frac{p^{2r+t}}{\gcd(p^r,p^t)\cdot\gcd(p^s,p^t)}\cdot\frac{\alpha'^2\delta^2d'+p^{2(s-r)}\gamma'^2\beta^2d'-2p^{m-n}\alpha'\beta\gamma'\delta d}{\gcd(\alpha',d')\cdot\gcd(\gamma',d')}\,.
\ee
For all $t\ge 0$ both factors are integers and, moreover, the left factor is at least $p$. 
Thus, this equation represents a factorization of $1$, but this is impossible and, therefore, $\alpha$ and $\gamma$ are coprime.

Next, we will show that $S$ has determinant $\pm 1$.
The map $$\sigma_{\alpha,\beta}\times \sigma_{\gamma,\delta}:E_1\times E_1\to E_1'\times E_2'\cong X$$ is an isogeny and we have for the degree $$\deg(\sigma_{\alpha,\beta}\times \sigma_{\gamma,\delta})=\deg(\sigma_{\alpha,\beta})\cdot \deg(\sigma_{\gamma,\delta})=
\gcd(\alpha,d)\cdot\gcd(\gamma,d)=\gcd(\alpha\gamma,d)\,,$$
where the last equality comes from the fact that $\alpha$ and $\gamma$ are coprime.
Thus, the degree of $\sigma_{\alpha,\beta}\times \sigma_{\gamma,\delta}$ is at most $d$.
On the other hand, the minimal degree of any isogeny $E_1\times E_1\to E_1\times E_2$ is at least $d$ and, hence, the $\gcd(\alpha\gamma,d)$ equals $d$.
From the equation given by the intersection $N_{\alpha,\beta}\cdot N_{\gamma,\delta}=1$ we then deduce that 
\be
	1=(\alpha\delta-\beta\gamma)^2=\det(S)^2\,. 
\ee
Lastly, by using explicit calculations we see that
\be 
	S^T M_L S = \left(
		\begin{array}{cc}
			\gcd(\alpha,d) &  0  \\
			0 & \gcd(\gamma,d)
		\end{array}
	\right)\,.
\ee
Note that if $\gcd(\alpha,d)>\gcd(\gamma,d)$, then we swap the roles $E_1'$ and $E_2'$.
This completes the proof.
\end{proof}

To conclude, we briefly return to Kani's result in Thm.~\ref{kani:iPP-thm}.
His proof is based on studying the \textit{refined Humbert invariant}
\be 
	q_L(D)=(D\cdot L)^2 -2D^2,\qquad D\in \Div(A)\,,
\ee
as a positive definite quadratic form on the quotient $\NS(A)/\Z L$.
We provide here an alternative proof of Kani's result, as a consequence of the characterization of irreducible principal polarizations given by Thm.~\ref{crit-irred-PP}.
For this, we use a criterion by Grube \cite[Chp. 9]{Grube:Idoneal} to characterize idoneal numbers, which can be refined as follows \cite[Rmk.~18 (b)]{Kani:idoneal}:
\begin{proposition}[\cite{Grube:Idoneal} \cite{Kani:idoneal}]\label{Grube:crit-idoneal}
An integer $n\ge 1$ is an idoneal number if and only if for every $B=1,...,\lfloor \sqrt{\frac{n}{3}}\rfloor$, and for every representation
\be 
	n + B^2 = AC
\ee
with $2B\le A\le C$ and $\gcd(A,2B,C)=1$, 
we have either $A=C$ or $A=2B$.
\end{proposition}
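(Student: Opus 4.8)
The plan is to derive this criterion from the classical genus-theoretic description of idoneal numbers, translating it into the stated arithmetic condition by means of the reduction theory recalled in Thm.~\ref{reduced-forms}. The starting point is the fact that $n$ is idoneal precisely when every genus of primitive positive definite forms of discriminant $-4n$ consists of a single class, equivalently when the form class group of discriminant $-4n$ has exponent dividing~$2$, equivalently when every primitive form of that discriminant is \emph{ambiguous}, i.e.\ properly equivalent to its own opposite under $(A,B,C)\mapsto(A,-B,C)$. Granting this, the whole argument reduces to matching ambiguous forms with the factorizations $n+B^2=AC$.

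The dictionary is set up as follows. Writing a form $Ax^2+2Bxy+Cy^2$ as the triple $(A,2B,C)$, its discriminant is $(2B)^2-4AC=-4(AC-B^2)$, so it has discriminant $-4n$ exactly when $n+B^2=AC$, and primitivity reads $\gcd(A,2B,C)=1$. By the improper-equivalence version of Thm.~\ref{reduced-forms} each class has a unique reduced representative with $0\le 2B\le A\le C$, and such a reduced form corresponds to an ambiguous proper class exactly in the three boundary cases $B=0$, $A=2B$, or $A=C$; in the strict-interior case $0<2B<A<C$ the form and its opposite are distinct proper classes, hence non-ambiguous. Thus $n$ is idoneal if and only if every primitive reduced form $(A,2B,C)$ of discriminant $-4n$ falls into one of these three boundary cases.

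It remains to normalise the range of $B$ and to remove the case $B=0$. Since a reduced form has $C\ge A$ and $B\le A/2$, Prop.~\ref{reduced-properties}~(i), applied with $\det(Q)=AC-B^2=n$, gives $A\le\sqrt{4n/3}$ and hence $B\le A/2\le\sqrt{n/3}$, which produces the range $1\le B\le\lfloor\sqrt{n/3}\rfloor$. Forms with $B=0$ are automatically ambiguous and therefore never obstruct idoneality, so they may be excluded; this is precisely why one only quantifies over $B\ge1$. For $B\ge1$ the surviving ambiguous cases are $A=2B$ and $A=C$, and assembling these observations yields exactly the stated equivalence.

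The main obstacle is the genus-theoretic input itself, namely the equivalence ``$n$ idoneal $\iff$ every class of discriminant $-4n$ is ambiguous''. This is the content taken from \cite{Grube:Idoneal} and \cite{Kani:idoneal}: one uses that the number of ambiguous classes equals the number of genera, while the number of classes in each genus is the order of the subgroup of squares in the class group, so that ``one class per genus'' forces that subgroup to be trivial, whence every class is ambiguous, and conversely. Once this is in hand, the remaining steps — the discriminant bookkeeping, the identification of ambiguous reduced forms with the boundary shapes $A=2B$ and $A=C$, and the bound on $B$ — are routine, and the choice to express everything through factorizations $n+B^2=AC$ is exactly what makes the criterion checkable in finitely many steps.
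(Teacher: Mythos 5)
The paper does not prove this proposition at all: it is quoted from Grube \cite{Grube:Idoneal} and from Kani \cite[Rmk.~18(b)]{Kani:idoneal} and used as a black box in the alternative proof of Thm.~\ref{kani:iPP-thm}, so there is no in-paper argument to compare against; your text is a reconstruction of the classical proof, and it is correct in outline. The dictionary is the right one: every form of discriminant $-4n$ has even middle coefficient, so these forms are exactly the triples $(A,2B,C)$ with $AC-B^2=n$, primitivity reads $\gcd(A,2B,C)=1$, and the uniqueness statement of Thm.~\ref{reduced-forms} shows that a proper class whose reduced representative lies in the interior $0<2B<A<C$ can never be ambiguous, since $(A,2B,C)$ and $(A,-2B,C)$ are then distinct reduced forms. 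Two points should be made explicit to close the argument. First, the other half of your ``exactly in the three boundary cases'' claim --- that the boundary forms really are ambiguous --- requires exhibiting proper (not merely improper) equivalences: the substitution $(x,y)\mapsto(x+y,y)$ carries $(A,-A,C)$ to $(A,A,C)$, settling the case $A=2B$; the substitution $(x,y)\mapsto(-y,x)$ carries $(A,-2B,A)$ to $(A,2B,A)$, settling the case $A=C$; and the case $B=0$ is trivial. Second, the appeal to Prop.~\ref{reduced-properties}(i) for the range of $B$ is harmless but not needed: any representation $n+B^2=AC$ with $0<2B\le A\le C$ gives $n=AC-B^2\ge 4B^2-B^2=3B^2$ directly, so $B\le\lfloor\sqrt{n/3}\rfloor$ is automatic and the stated range merely makes the criterion a finite check. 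The genuinely nontrivial input --- that ``one class per genus'' for discriminant $-4n$ is equivalent to the form class group having exponent dividing $2$, hence to every class being ambiguous --- is classical genus theory (Gauss's principal genus theorem combined with the count of ambiguous classes), and since the paper itself imports the entire proposition from the literature, resting your derivation on that classical theorem is a legitimate foundation; what your route buys is a self-contained reduction of the arithmetic criterion to exactly that one classical fact.
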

\begin{proof}[Proof of Thm.~\ref{kani:iPP-thm}]
To begin with, we claim that $d$ is an idoneal number.
By Thm.~\ref{Grube:crit-idoneal} there does not exist an irreducible principal polarization on $X$ if and only if there does not exist a representation $d+B^2=AC$ with $\gcd(A,B,C)=1$ and $0<2B\le A \le C$.
Thus, by the previous criterion \ref{Grube:crit-idoneal} it follows that $d$ is an idoneal number.

To complete the claim, we have to show that for an idoneal number $d$ there does not exist a representation $d+B^2=AC$ with $\gcd(A,B,C)=1$ and $0<2B\le A \le C$ if and only if $d\equiv 2,4,6$ modulo $8$.

To this end, we first show that there exists an irreducible polarization if $d$ is odd or if $d$ is divisible by $8$. 
If $d$ is odd, then we have $(A,B,C)=(2,1,\frac{d+1}{2})$.
If $d$ is divisible by $8$, then we have $(4,2,\frac{d+4}{4})$ for $d>8$ and $(3,1,3)$ for $d=8$.

Lastly, we show that there does not exist an irreducible polarization if $d$ is idoneal and $d\equiv 2,4,6$ modulo $8$.

Case $1$: Let $d$ be an idoneal number with $d\equiv 2$ or $d\equiv 6$, then we have $d=2d'$ for an odd number $d'$.
Assume that we have a representation $d+B^2=AC$ with $\gcd(A,B,C)=1$ and $C\ge A\ge 2B>0$. 
Since $d$ is idoneal, it follows from Prop.~\ref{Grube:crit-idoneal} that $A=2B$ or $A=C$.

Case $1.1$: If $A=2B$, then $2d'+B^2=2BC$ and it follows that $B$ is even.
But then $2d'=2BC-B^2$ is divisible by $4$ which is impossible since $d'$ is odd.

Case $1.2$: If $A=C$, then we have $2d'+B^2=C^2$.
Since $\gcd(A,B,C)=1$, it follows that $B$ and $C$ are odd.
But then $2d'=(C-B)(C+B)$ is divisible by $8$ which is a contradiction.

Case $2$: Let $d$ be an idoneal number with $d\equiv 4$, then we have $d=4d'$ for an odd number $d'$.
Assume that we have a factorization $d+B^2=AC$ with $\gcd(A,B,C)=1$ and $C\ge A\ge 2B>0$. 
Again, we either have $A=2B$ or $A=C$.

Case $2.1$: If $A=2B$, then $4d'+B^2=2BC$ and it follows that $B$ is even.
Thus, there exists an odd number $B'$ and $n\ge 1$ such that $B=2^nB'$.
Since we have $4d'+2^{2n}B'^2=2^{n+1}B'C$, it follows that $n=1$, because otherwise $d'$ would be divisible by $2$.
But then $C$ has to be an even number, since we have $d'+B'=B'C$.
This, however, is impossible, because then we would have $\gcd(A,B,C)\ge 2$.

Case $2.2$: If $A=C$, then we argue as in case $1.2$ and deduce that
$4d'=(C-B)(C+B)$ is divisible by $8$ which is a contradiction since $d'$ is odd.
\end{proof}

%*****************************************************************************

%***************************************************************************** % Addresses

\footnotesize
   \bigskip
   Maximilian Schmidt
   Fachbereich Mathematik und Informatik,
   Philipps-Universit\"at Marburg,
   Hans-Meerwein-Stra\ss e,
   D-35032 Marburg, Germany.

   \nopagebreak
   \textit{E-mail address:} \texttt{schmid4d@mathematik.uni-marburg.de}

%*****************************************************************************

\end{document}